\renewenvironment{proof}{\noindent\textbf{Proof.}}{\hfill$\square$\medskip}
\newenvironment{remark}{\noindent\textbf{Remark.}}{\hfill$\diamondsuit$\medskip}
\newtheorem{lemma}{Lemma}
\newtheorem{thm}[lemma]{Theorem}
\newtheorem{cor}[lemma]{Corollary}
\newtheorem{obs}[lemma]{Observation}
\newcommand{\define}{\mathrel{:=}}
\newcommand{\curve}{\mathop\gamma}
\newcommand{\OO}{\mathop\mathcal{O}}
\newcommand{\pow}{\mathop\mathcal{P}}
\newcommand{\p}{\mathop{p}\nolimits}
\newcommand{\q}{\mathop{q}\nolimits}
\newcommand{\OTheta}{\mathop\Theta}
\newcommand{\Int}{\mathop\mathrm{Int}}
\newcommand{\f}{\mathop{f}}
\newcommand{\clique}{\mathop\omega}
\begin{document}

\title{Refining the Hierarchies of Classes of Geometric Intersection Graphs\thanks{A preliminary version of this work was presented at the Discrete Mathematics Days, Barcelona, July 6-8, 2016.}}
\author{Sergio Cabello\thanks{Department of Mathematics, IMFM, and Department of Mathematics, FMF, University of Ljubljana, Slovenia. Supported by the Slovenian Research Agency, programme P1-0297 and project L7-5459. E-mail: \texttt{sergio.cabello@fmf.uni-lj.si}.}
\and Miha Jej\v{c}i\v{c}\thanks{Faculty of Mathematics and Physics, University of Ljubljana, Slovenia. E-mail: \texttt{jejcicm@gmail.com}.}}
\maketitle

\begin{abstract}
	We analyse properties of geometric intersection graphs to show strict containment between some natural classes of geometric intersection graphs. 
	In particular, we show the following properties:
	\begin{itemize}
		\item A graph $G$ is outerplanar if and only if the 1-subdivision of $G$ is outer-segment.
		\item For each integer $k\ge 1$, the class of intersection graphs of segments with $k$ different lengths is a strict subclass of the class of intersection graphs of segments with $k+1$ different lengths.
		\item For each integer $k\ge 1$, the class of intersection graphs of disks with $k$ different sizes is a strict subclass of the class of intersection graphs of disks with $k+1$ different sizes.	
        \item The class of outer-segment graphs is a strict subclass of the class of outer-string graphs.
	\end{itemize}
\end{abstract}

\section{Introduction}
For a set~$U$ and a nonempty family~$\mathcal{F}$ of subsets of~$U$ we define the \emph{intersection graph} of~$\mathcal{F}$, denoted by~$\Int(\mathcal{F})$, as the graph with vertex set~$V\left(\Int(\mathcal{F})\right)\define\mathcal{F}$, and edge set~$E\left(\Int(\mathcal{F})\right)\define\{AB\mid A,B\in \mathcal{F},\,A\neq B,\,A\cap B\neq\emptyset\}$. See Figure~\ref{fig:example} for an example. A graph~$G$ is an \emph{intersection graph} on~$U$ if there exists a family~$\mathcal{F}$ of subsets of~$U$ such that $G$ is isomorphic to~$\Int(\mathcal{F})$, and we call such a family~an \emph{intersection model} or an \emph{intersection representation} of~$G$. When considering \emph{geometric intersection graphs} we take~$\mathcal{F}$ to be a family of geometric objects, usually with~$U=\mathbb{R}^n$.  

\begin{figure}
	\centering
	\includegraphics[scale=1.2]{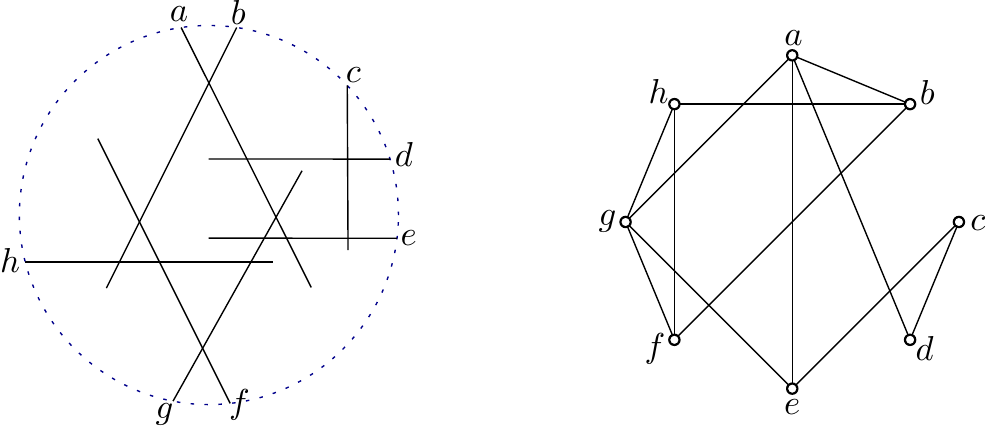}
	\caption{On the left there is a set of segments with one endpoint on a common circle. The intersection graph for these segments is shown on the right. This is an outer-segment graph.}
	\label{fig:example}
\end{figure}

For the rest of the paper we will restrict our attention to graphs defined by geometric planar objects, that is, $U=\mathbb{R}^2$. We consider the following types of (geometric intersection) graphs:
\begin{description}
	\item[string:] A graph is a \emph{string} graph if it is an intersection graph of curves in the plane.
	\item[outer-string:] A graph is an \emph{outer-string} graph if it has an intersection model consisting of curves lying in a disk such that each curve has one endpoint on the boundary of the disk.
	\item[circle:] A graph is a \emph{circle} graph if it is an intersection graph of chords in a circle.
	\item[segment:] A graph is a \emph{segment} graph if it is an intersection graph of straight-line segments in the plane.
	\item[ray:] A graph is a \emph{ray intersection} graph if it is an intersection graph of rays, or equivalently halflines, in the plane.
	\item[outer-segment:] A graph is an \emph{outer-segment} graph if it has an intersection model consisting of straight-line segments lying in a disk such that each segment has one endpoint on the boundary of the disk.
	\item[$k$-length-segment:] A graph is a \emph{$k$-length-segment} graph if it is an intersection graph of straight-line segments of at most $k$ different lengths in the plane. A $1$-length-segment graph is also called a \emph{unit-segment} graph.
	\item[disk:] A graph is a \emph{disk} graph if is an intersection graph of disks in the plane.
	\item[$k$-size-disk:] A graph is a \emph{$k$-size-disk} graph if it is an intersection graph of disks of at most $k$ different sizes in the plane. A $1$-size-disk graph is often called a \emph{unit-disk} graph.
\end{description}

Some relations between different classes of geometric intersection graphs are obvious. For example, ray intersection graphs are outer-segment graphs, which in turn are segment graphs and outer-string graphs. Similarly, unit-disk graphs are obviously disk graphs. The main focus of this paper is to provide properties showing that some of these containments between classes of geometric intersection graphs are proper. Although in some cases one can show strict containment using other properties, like for example \emph{$\chi$-boundedness}, our approaches are simpler, provide additional information, and in several cases provide a more detailed characterisation. A precise discussion of the alternative approaches is given in the relevant section. 

Here is a summary of our main results. 
\begin{itemize}
	\item The 1-subdivision of a graph $G$ is outer-segment if and only if it is outer-string and if and only if $G$ is outerplanar. This implies that the class of outer-string graphs is strictly contained in the class of string graphs. This is discussed in Section~\ref{sec:string}.
	\item For each integer $k\ge 1$, the class of $k$-length-segment graphs is strictly contained in the class of $(k+1)$-length-segment graphs. This is discussed in Section~\ref{sec:segment}.
	\item For each integer $k\ge 1$, the class of $k$-size-disk graphs is strictly contained in the class of $(k+1)$-size-disk graphs. This is discussed in Section~\ref{sec:disk}.
	\item The class of outer-string graphs is not contained in the class of segment graphs. As a consequence, the class of outer-segment graphs is strictly contained in the class of outer-string graphs. This is discussed in Section~\ref{sec:outer}.
\end{itemize}

There is much work on geometric intersection graphs and it is beyond the aims of this paper to provide a careful overview. We just emphasize some foundational articles, some big steps in the research, and some papers with a similar style. See the book by McKee and McMorris~\cite{MM99} for a general reference to intersection graphs.
Several key results on string graphs have been obtained by Kratochv\'{\i}l and Matou\v{s}ek~\cite{K91a,K91b,kratochvilExp,M14}; see the notes by Matou\v{s}ek~\cite{M13} for an overview. Showing that the recognition problem is in $\mathcal{NP}$ was only achieved relatively recently~\cite{SSS03}.
For segment graphs, cornerstone results include~\cite{CG09,KM94,MM13}. In particular, Kratochv\'{\i}l and Matou\v{s}ek~\cite{KM94} make a systematic study of the relation between different classes of geometric intersection graphs; the type of style we follow in this paper. Some key results for unit disk graphs include~\cite{BK98,CCJ90}. Janson and Kratochv\'{\i}l~\cite{JK92} analyse the probability that a random graph is some type of a geometric intersection graph. In their recent work, Chaplick et al.~\cite{CFHW15} also study the strict inclusion between different types of geometric intersection graphs with similar definitions. 

\paragraph{Notation.}\par For each natural number~$n$ we use~$[n]$ to denote the set~$\{1,\,2,\,3,\,\ldots,\,n\}$.

\section{String vs outer-string}
\label{sec:string}

In this section we discuss properties that show that the class of outer-string graphs is a strict subclass of string graphs. Before providing our new approach, we discuss alternative approaches.

\subsection{Alternative approaches.}
\label{sec:string:alternative}
Kratochv\'{\i}l et al.~\cite{KGK} show that there are string graphs that are not outer-string. In particular, they show that there is a graph on 9 vertices that is not outer-string, while any graph on 11 vertices is a string graph. They also provide a neat characterisation of outer-string graphs: a graph~$G$ is an outer-string graph if and only if adding an arbitrarily connected clique to~$G$ results in a string graph. This implies that, if we take a minimal non-string graph, with respect to vertex deletion, and remove any vertex, we get a string graph that is not an outer-string graph.

Janson and Uzzell~\cite{JU14} show that the class of outer-string graphs is just a small fraction of all the string graphs. Their approach is based on graph limits. 

Another approach is based on~\emph{$\chi$-boundedness}. Let $\chi(G)$ and $\clique(G)$ denote the chromatic and the clique numbers of $G$, respectively. A class of graphs is~\emph{$\chi$-bounded} if there is some function $f\colon\mathbb{N}\rightarrow\mathbb{N}$ such that, for each graph~$G$ in the class, it holds that $\chi(G)\le\f(\clique(G))$. It is known that outer-string graphs are $\chi$-bounded~\cite{chi-bound}, while string graphs are not $\chi$-bounded~\cite{pawlik}. Therefore, both classes cannot be the same. This type of argument was provided by Bartosz Walczak in the context of segment graphs; we will encounter it again in Section~\ref{sec:segment}.

\subsection{Our approach.}
If in a graph we replace an edge $e$ with a path of length $2$ we say we $1$-subdivided the edge $e$. The \emph{$1$-subdivision} of a graph $G$ is the graph obtained from $G$ by $1$-subdividing each edge of $G$ once. A graph is~\emph{outerplanar} if it has a crossing-free embedding in the plane such that all vertices are on the same face. We have the following characterisation.

\begin{lemma}
	\label{outerstringCharacterization2}
	Let $G$ be a graph and let $H$ be its 1-subdivision. The following statements are equivalent:
	\begin{enumerate}
		\item $H$ is a circle graph.
		\item $H$ is outer-segment.
		\item $H$ is outer-string.
		\item $G$ is outerplanar.
	\end{enumerate}
\end{lemma}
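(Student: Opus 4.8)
The plan is to prove the cycle of implications $(1)\Rightarrow(2)\Rightarrow(3)\Rightarrow(4)\Rightarrow(1)$. Two of these are immediate from the definitions: a chord of a circle is in particular a segment with both (hence at least one) endpoint on the boundary, so every circle graph is outer-segment, giving $(1)\Rightarrow(2)$; and every segment is a curve, so every outer-segment graph is outer-string, giving $(2)\Rightarrow(3)$. The substance of the lemma therefore lies in the two remaining implications $(4)\Rightarrow(1)$ and $(3)\Rightarrow(4)$, which convert between a combinatorial embedding of $G$ and a geometric or topological representation of its subdivision $H$. Throughout I would write the vertices of $H$ as the original vertices $V(G)$ together with one subdivision vertex $w_e$ per edge $e=uv$ of $G$, so that the only adjacencies in $H$ are $w_e\sim u$ and $w_e\sim v$; in particular $V(G)$ is an independent set in $H$ and the subdivision vertices form an independent set in $H$.

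For $(4)\Rightarrow(1)$ I would start from the standard fact that an outerplanar graph admits a drawing with its vertices in convex position on a circle and its edges drawn as pairwise non-crossing straight chords. Fix such a drawing and read off the cyclic order of $V(G)$. To build a circle model of $H$, I replace each vertex $v$ by a short disjoint arc (a ``cluster'') of a new circle, keeping the clusters in the same cyclic order. For each $v$ I place a chord $c_v$ joining the two extreme points of its cluster, and for each edge $e=uv$ I place a chord $d_e$ joining an interior point of the cluster of $u$ to an interior point of the cluster of $v$. I would then verify the three required crossing conditions: the $c_v$ are pairwise non-crossing (disjoint clusters); each $d_e$ crosses $c_u$ and $c_v$ but no other $c_{v'}$, since its endpoints are the only ones lying on the short arcs cut off by $c_u$ and $c_v$; and, crucially, two edge-chords of vertex-disjoint edges cross if and only if the corresponding original chords cross, which never happens. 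The only point needing care is a pair of edges sharing a vertex $v$: there I order the interior endpoints inside the cluster of $v$ according to the rotation at $v$ in the fixed drawing, which keeps the corresponding $d_e$ non-crossing. This yields a circle representation of $H$.

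For $(3)\Rightarrow(4)$ I would take an outer-string model of $H$ in a disk $D$ and recover an outerplanar embedding of $G$. Denote by $\gamma_x$ the curve of vertex $x$ and by $g_x\in\partial D$ its grounding point. For each edge $e=uv$ of $G$, route a curve $R_e$ for the edge $uv$ by following $\gamma_u$ from $g_u$ to a crossing point with $\gamma_{w_e}$, then $\gamma_{w_e}$ to a crossing point with $\gamma_v$, then $\gamma_v$ to $g_v$; these crossings exist precisely because $w_e$ is adjacent to both $u$ and $v$ in $H$. The key observation is that non-adjacent vertices of $H$ have disjoint curves, and one checks that any two underlying curves used by distinct routes $R_e$ and $R_f$ correspond to non-adjacent vertices of $H$ unless $e$ and $f$ share an endpoint in $G$. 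Hence distinct routes meet only near common grounding points, and a standard fanning-out at each $g_v$ (ordered by the order of the crossings encountered along $\gamma_v$) resolves these contacts into a genuine planar drawing. Placing each vertex $v$ at $g_v\in\partial D$ and drawing the edges as the curves $R_e$ inside $D$ then exhibits a crossing-free embedding of $G$ with all vertices on $\partial D$, that is, on a common face; so $G$ is outerplanar.

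I expect the main obstacle to be the crossing bookkeeping in these two transfers rather than any single hard idea. In $(4)\Rightarrow(1)$ it is the within-cluster ordering forced by edges sharing a vertex; in $(3)\Rightarrow(4)$ it is arguing that the routes $R_e$ can be perturbed into a genuine crossing-free embedding, leaning on the fact that non-adjacency in $H$ makes the underlying strings disjoint. Of the two, $(3)\Rightarrow(4)$ seems the more delicate, since it must extract a combinatorial embedding from arbitrary curves, whereas $(4)\Rightarrow(1)$ is an explicit and directly checkable construction. (One could instead prove $(3)\Rightarrow(4)$ by contraposition through the excluded minors $K_4$ and $K_{2,3}$ of outerplanar graphs, but the routing argument above is more direct and self-contained.)
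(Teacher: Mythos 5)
Your proposal is correct and follows essentially the same route as the paper: $(1)\Rightarrow(2)\Rightarrow(3)$ are dismissed as trivial, $(4)\Rightarrow(1)$ replaces each vertex by a chord spanning a short arc of the circle and each edge by a nearby chord whose endpoints are spread inside the arcs of its two endpoints, and $(3)\Rightarrow(4)$ recovers an outerplanar drawing by routing each edge along $\gamma_u$, $\gamma_{w_e}$, $\gamma_v$ and disentangling the shared portions along each vertex curve, which is exactly the paper's fattening of the vertex curves into regions $R_u$ with $\deg_G(u)$ interior-disjoint paths. The one step the paper makes explicit that you leave implicit is trimming each subdivision curve to a subarc meeting $\gamma_u$ and $\gamma_v$ exactly once and containing no other intersections, which is what guarantees that distinct routes overlap only along the vertex curves and that your fanning-out succeeds.
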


Jean Cardinal noted that $(3)\iff(4)$ using the same type of approach that is used to show that a graph is planar if and only if its 1-subdivision is a string graph~\cite{schaefer,sinden}.

\begin{proof} $(1)\Longrightarrow(2)$ and $(2)\Longrightarrow(3)$ are trivial.

$(3)\Longrightarrow(4)$: Consider an intersection model of $H$ showing that $H$ is outer-string. For each vertex $u$ of $H$, let $\curve(u)$ be the curve corresponding to $u$ in the model. We may assume that there are no self-intersecting curves in the model, as otherwise we can find another model with this property. We use $w_{uv}$ for the vertex used to 1-subdivide $uv\in E(G)$. 

Consider first the curves $\curve(w_{uv})$ corresponding to the vertices $w_{uv}$ used to $1$-subdivide $G$. We call the curve corresponding to the vertex $w_{uv}\in V(H)$ the \emph{$uv$-curve}. For each $uv\in E(G)$, the curve~$\curve(w_{uv})$ intersects only $\curve(u)$ and $\curve(v)$, and no other curves of the model. Follow each such $uv$-curve from the boundary of the disk containing all curves, and remove all parts except for a part between two intersections with $\curve(u)$ and $\curve(v)$, containing no other intersections. Call this new curve~$\tilde{\curve}(uv)$. We are now left with a collection of pairwise non-intersecting outer-strings corresponding to vertices of $G$, and simple curves~$\tilde{\curve}(uv)$ corresponding to edges of $G$ between two different vertex-curves. Each~$\tilde{\curve}(uv)$ intersects $\curve(u)$ and $\curve(v)$ exactly once, and intersects no other curve. See Figure~\ref{fig:theFatteningAll}~a).

\begin{figure}[p]
	\centering
	\includegraphics[width=.9\textwidth]{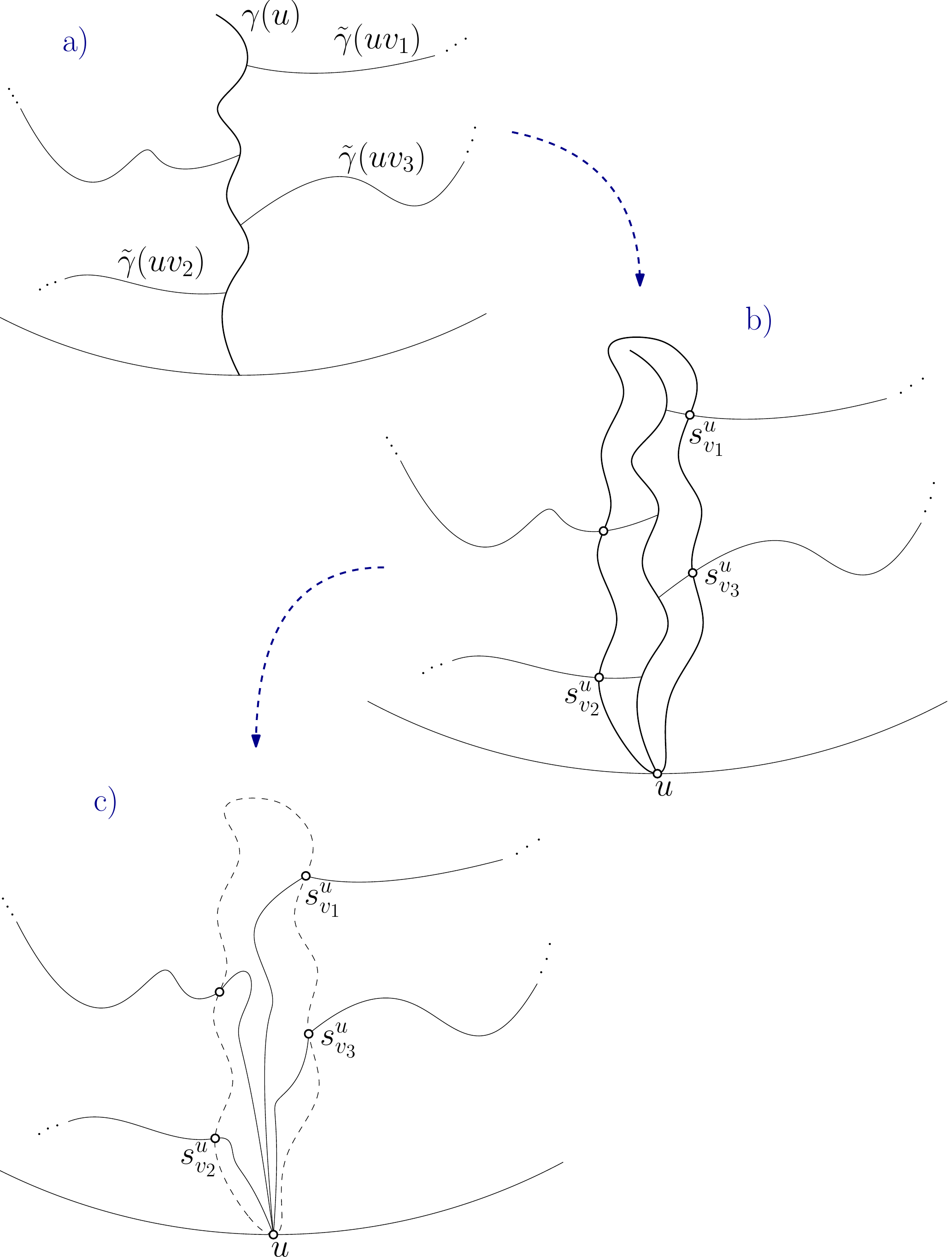}
	\caption{The implication $(3)\Longrightarrow(4)$ in Lemma~\ref{outerstringCharacterization2}.}
	\label{fig:theFatteningAll}
\end{figure}

Fatten each curve corresponding to $u\in V(G)$ slightly without creating any intersections between vertex-curves, and call such a region~$R_u$. For each vertex~$u$ of $G$ and each edge $uv\in E(G)$, let $s^u_v$ be the point where the curve $\tilde{\curve}(uv)$ first touches the region~$R_u$.

We are going to define now an embedding of $G$. Place each vertex $u$ of $G$ at the disk-endpoint of the corresponding curve $\curve(u)$. Within the region $R_u$ one can find $\deg_G(u)$ interior-disjoint paths connecting the vertex $u$ and the points $s^u_v$, where $uv\in E(G)$. See Figure~\ref{fig:theFatteningAll}~c). The paths between different regions $R_u$ do not intersect.
Each edge $uv$ of $G$ is drawn as the concatenation of the path within $R_u$ from $u$ to $s^u_v$, the curve $\tilde{\curve}(uv)$, and the path within $R_v$ from $s^v_u$ to $v$. We have obtained a crossing-free drawing of $G$ with all the vertices on the boundary of the disk and all the edges in the interior of the disk. Thus the drawing shows that $G$ is outerplanar.

$(4)\Longrightarrow(1)$: Consider an outerplanar graph $G$ with $n$ vertices, fix an outerplanar embedding of $G$, and let $v_0,\,v_1,\,\ldots,\,v_{n-1}$ be the vertices of $G$ along the facial walk of the outer face of the embedding, skipping repetitions of vertices. Placing the vertices $v_0,\,\ldots,\,v_{n-1}$ along a circle $C$, such that, for each vertex $v_i$, the vertices $v_{i-1}$ and $v_{i+1}$ are its predecessor and its successor along $C$ (where indices are modulo $n$), and drawing the edges of $G$ as straight-line segments, we get a planar drawing of $G$. In particular, we can place the vertices $v_0,\,\ldots,\,v_{n-1}$ at the corners of a regular $n$-gon inscribed in a circle~$C$. Consider one such outerplanar embedding of $G$ with straight-line segments. Note that $v_i$ and $v_{i+1}$ are not necessarily neighbours in $G$. 

Replace each vertex $v_i$ with a chord in $C$, with one endpoint $1/3$ of the way along $C$ towards $v_{i-1}$, and the other $1/3$ of the way along $C$ towards $v_{i+1}$. See Figure~\ref{fig:outerplanar2circleAll}.
For each vertex $v_i$ we also do the following. For each neighbour $v_j,\,j\neq i$ of $v_i$ in $G$, replace the straight-line drawing of the edge $v_iv_j$ with a chord in $C$ lying in the same position. In order to avoid these chords having a common endpoint, spread out their endpoints in the arc of $C$ centered on $v_i$ up to $1/3$ of the way along $C$ towards $v_{i-1}$ and up to $1/3$ of the way along $C$ towards $v_{i+1}$. See Figure~\ref{fig:outerplanar2circleAll}.

\begin{figure}[p]
	\centering
	\includegraphics{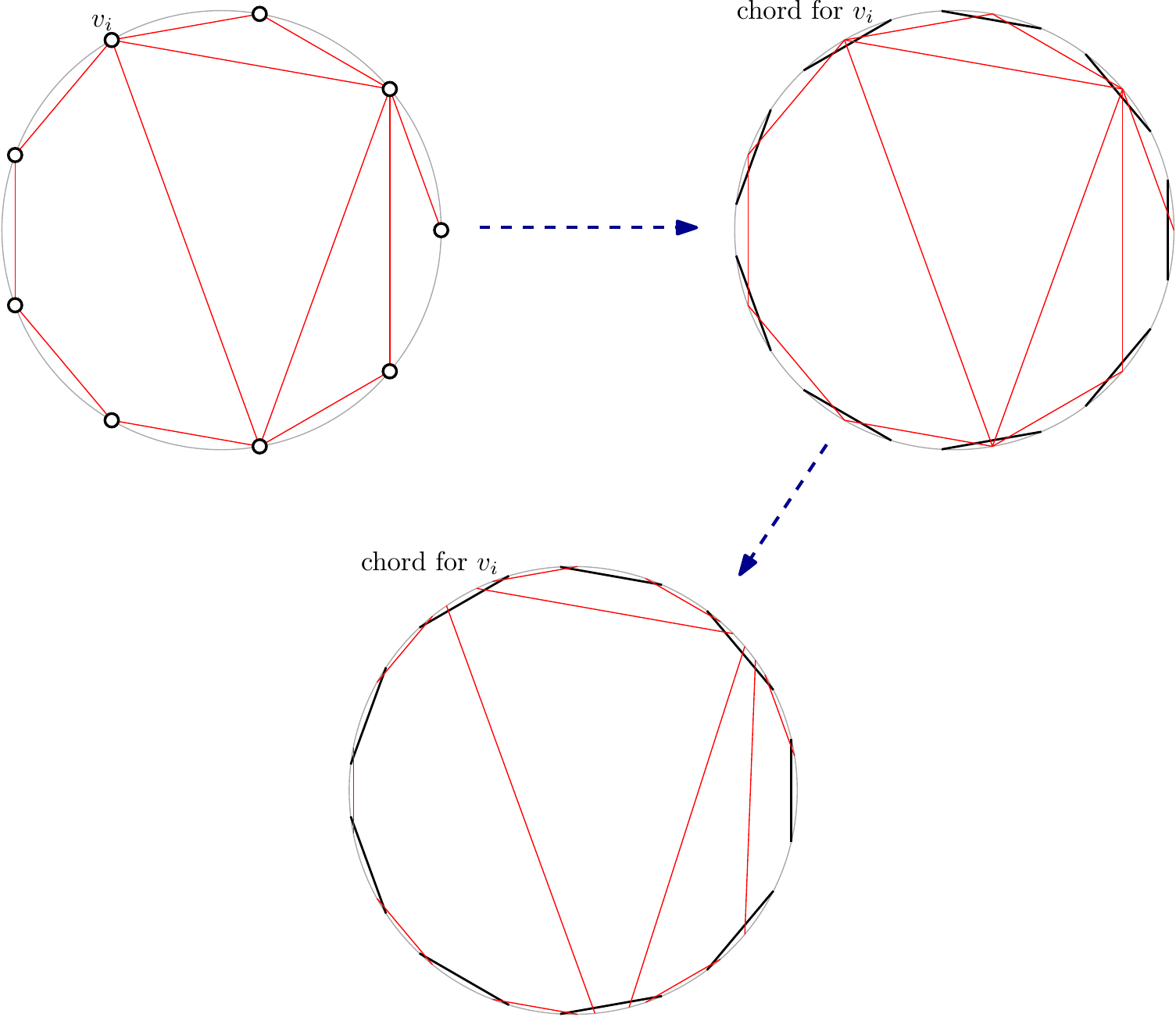}
	\caption{The implication $(4)\Longrightarrow(1)$ in Lemma~\ref{outerstringCharacterization2}.}
	\label{fig:outerplanar2circleAll}
\end{figure}

We claim that after performing this process for all the vertices $v_0,\,\ldots,\,v_{n-1}$, we have a model showing that $H$ is a circle graph. The chords corresponding to the vertices $v_0,\,\ldots,\,v_{n-1}$ are pairwise disjoint. A chord corresponding to the edge $v_iv_j$ intersects the chords for $v_i$ and $v_j$, and does not intersect any other chord. 
\end{proof}

\begin{remark}
The construction used in the implication $(4)\Longrightarrow(1)$ can be adapted to work in strongly polynomial time using Pythagorean triples, although the vertices do not form anymore the vertices of a regular $n$-gon. A \emph{Pythagorean triple} $(a,\,b,\,c)$ is a triple of positive integers such that $a\leq b\leq c$ and $a^2+b^2=c^2$. Pythagorean triples correspond to points on the unit circle with rational coordinates because, for a Pythagorean triple $(a,\,b,\,c)$ the point $(a/c,\,b/c)$ lies on the unit circle. It also holds that for each natural number $n$ there exists a \emph{hypotenuse} $c$, and at least $n$ different Pythagorean triples $(a_i,\,b_i,\,c)$ \cite{sierpinski}.

One method of generating $n$ triples with numbers in~$\OO\left(n^2\right)$ is by the parameterization~$(v^2-u^2,\,2uv,\,u^2+v^2)$, where $u$ and $v>u$ are relatively prime and of opposite parity~\cite{shanks}, which generates a set of distinct triples.

Both finding the desired hypotenuse and the triples for a given $n$ can be done in polynomial time. Therefore in polynomial time we can find $n$ distinct points on the unit circle with rational coordinates, and furthermore the coordinates are polynomially bounded. Those points are not the corners of a regular $n$-gon, but the construction can be done in strongly polynomial time. 
\end{remark}

It has been known~\cite{outerplanarAreCircle} that outerplanar graphs are circle graphs. Lemma~\ref{outerstringCharacterization2} implies that also the $1$-subdivision of an outerplanar graph is a circle graph. Note that the subdivision of an outerplanar graph is not outerplanar in general.

Using the characterisation in Lemma~\ref{outerstringCharacterization2}, and noting that ray graphs are outer-segment graphs, we have the following.

\begin{cor}
The 1-subdivision of $K_4$ is a string graph that is not outer-string. It is also a segment intersection graph that is not a ray intersection graph. 
\end{cor}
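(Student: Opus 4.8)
The plan is to prove the corollary by instantiating Lemma~\ref{outerstringCharacterization2} at the specific graph $G = K_4$ and then verifying the two additional geometric claims separately. The first sentence follows almost immediately from the lemma: since $K_4$ is not outerplanar (it is one of the two forbidden minors for planarity, and in particular it is not outerplanar because outerplanar graphs exclude $K_4$ as a minor), statement $(4)$ fails for $G=K_4$. By the equivalence $(3)\iff(4)$, its $1$-subdivision $H$ is therefore \emph{not} outer-string, and a fortiori not a circle graph or outer-segment graph. On the other hand, $H$ is a string graph: I would invoke the standard fact, cited in the excerpt via \cite{schaefer,sinden}, that the $1$-subdivision of any planar graph is a string graph, and note that $K_4$ is planar. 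This settles that $H$ is a string graph that is not outer-string.

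For the second sentence I must show that $H$ is a segment graph but not a ray intersection graph. The ``not a ray graph'' half is the easy direction given what we have: the excerpt observes that ray graphs are outer-segment graphs, hence outer-string; since $H$ is not outer-string, it cannot be a ray graph. So the only genuinely new work is exhibiting an explicit representation of $H$ by straight-line segments. The plan here is a direct construction rather than an appeal to the lemma (which only produces a circle/outer-string model, and we already know no such model exists). I would draw $K_4$ in the plane with its four vertices as four segments and then realise each subdivision vertex $w_{uv}$ as a short segment crossing the two segments for $u$ and $v$; the point is that $H$ has a very small, fixed size, so one can simply lay down an explicit picture and check the incidences by inspection.

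Concretely, I would place the four ``vertex segments'' in general position so that each pair is either disjoint or meets transversally in a single interior point, arrange it so that for every edge $uv$ there is a region near where segments $\curve(u)$ and $\curve(v)$ come close, and insert a short segment $\curve(w_{uv})$ that stabs exactly $\curve(u)$ and $\curve(v)$ and nothing else. Since $K_4$ has only six edges and four vertices, this is a finite case that can be exhibited by a single figure; the verification is purely that each $w_{uv}$-segment meets the two intended segments and avoids all others, and that the four vertex-segments realise the correct adjacencies (in the $1$-subdivision the original vertices are pairwise non-adjacent, so the four vertex-segments must be pairwise disjoint, which is easy to arrange). I expect the main obstacle to be purely presentational: making sure the short segment for each $w_{uv}$ can be placed so it crosses precisely $\curve(u)$ and $\curve(v)$ while avoiding the other two vertex-segments and all other subdivision segments. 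This is a local transversality/general-position argument and poses no real difficulty, but it is the one place where an explicit drawing (analogous to Figure~\ref{fig:example}) carries the argument.
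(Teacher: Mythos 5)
Your proposal is correct and follows essentially the same route as the paper: planarity of $K_4$ gives the string property via \cite{schaefer,sinden}, non-outerplanarity plus the equivalence $(3)\iff(4)$ of Lemma~\ref{outerstringCharacterization2} rules out outer-string (and hence ray, since ray graphs are outer-segment and thus outer-string), and an explicit segment model handles the last claim, which the paper dismisses as ``trivial to construct'' and you merely elaborate. The only blemish is your initial remark that the four vertex-segments may ``meet transversally,'' which contradicts the fact that the original vertices are pairwise non-adjacent in the $1$-subdivision, but you correct this yourself later, so the argument stands.
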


\begin{proof} $K_4$ is planar, therefore its 1-subdivision is a string graph; see~\cite{schaefer,sinden}. $K_4$ is not outerplanar, hence its 1-subdivision is not outer-string by Lemma~\ref{outerstringCharacterization2}. Since it is not outer-string, it is not a ray intersection graph. A segment representation is trivial to construct.
\end{proof}

\section{Segment graphs}
\label{sec:segment}

In this section we show that the class of $k$-length-segment graphs is a proper subclass of $(k+1)$-length-segment graphs. In particular, this implies that the class of unit-segment graphs is a proper subclass of the class of segment graphs.

In an informal discussion Bartosz Walczak suggested that $\chi$-boundedness, introduced in Section~\ref{sec:string:alternative}, implies some separation between subclasses of segment graphs. In particular, since unit-segment graphs are $\chi$-bounded~\cite{suk}, and segment graphs are not $\chi$-bounded~\cite{pawlik}, these two classes are different. The known rough upper estimates for $f(\omega(G))$ that are known in the context of $\chi$-boundedness are not strong enough to separate $k$ from~$k+1$ different lengths. In general, it is not clear how much such  approach can be pushed to distinguish $k$-length-segment graphs for different values of $k$.

Our approach is based on gadgets~\cite{KM94,orderingGadget} that can force a nested sequence of disjoint triangles (triples of segments intersecting pairwise).

For the notation in the following lemma, Figure~\ref{fig:ordGadgetSchematic} may be useful.

\begin{lemma}[\cite{orderingGadget}]
\label{lemma:orderingGadget}
Suppose we have Jordan curves $\ell$, $(\ell_i)_{i\in[n]}$, $\left(s^j_i\right)_{i\in[n-1],\,j\in[3]}$, and $(c_i)_{i\in[4n]}$ in the plane so that
\begin{enumerate}
\item $\ell$ crosses $\ell_i$, for every $i\in[n]$, and $s^2_i$, for every $i\in[n-1]$,
\item $c_i$ crosses $c_{i+1}$ ($c_1$ for $i=4n$) exactly once, for every $i\in[4n]$,
\item $\ell_i$ crosses $c_{2i}$ and $c_{4n-2i+2}$, for every $i\in[n]$,
\item both $s^1_i$ and $s^3_i$ cross $s^2_i$, for every $i\in[n-1]$,
\item $s^1_i$ crosses $c_{2i+1}$ and $s^3_i$ crosses $c_{4n-2i+1}$, for every $i\in[n-1]$,
\item the only other crossings among these curves are between pairs of $\ell_i$.
\end{enumerate}
Then the curves $\ell_i$ cross $\ell$ either in the order $\ell_1,\,\ldots,\,\ell_n$ or in the reverse of that order.
\end{lemma}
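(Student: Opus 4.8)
The plan is to read off the cyclic combinatorial structure forced by the crossing conditions, extract a Jordan curve from the $c_i$, and then reduce the conclusion to the observation that a simple arc crossing each member of a nested family of separating arcs exactly once must traverse the induced regions monotonically.

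First I would build the ``rim''. By condition~(2) the curves $c_1,\dots,c_{4n}$ cross cyclically, consecutive ones meeting exactly once, and by condition~(6) no two non-consecutive $c_i$ meet; concatenating, for each $i$, the sub-arc of $c_i$ between its crossing with $c_{i-1}$ and its crossing with $c_{i+1}$ produces a closed curve $\Gamma$, which I would argue is simple, hence a Jordan curve. Let $D$ be the open disk it bounds. I would then locate every other curve relative to $\Gamma$: conditions~(1),(3),(4),(5), together with~(6), pin down exactly which curves meet $\Gamma$ and where. In particular $\ell$ meets no $c_i$, so $\ell$ lies in a single component of the complement of $\Gamma$; since it crosses the $\ell_i$ and the $s^2_i$, that component is $D$. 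Reading off positions, $\ell_i$ meets $\Gamma$ only at $c_{2i}$ and $c_{4n-2i+2}$, while the separator $\sigma_i := s^1_i\cup s^2_i\cup s^3_i$ forms a simple arc in $\overline{D}$ joining the point on $c_{2i+1}$ to the point on $c_{4n-2i+1}$ and crossing $\ell$ on its $s^2_i$ part.

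Next I would extract the nesting. Listing the attachment points in cyclic order along $\Gamma$ gives, on one side, $\ell_1, s^1_1, \ell_2, s^1_2,\dots,\ell_n$ and, on the other, $\ell_n, s^3_{n-1},\dots,s^3_1,\ell_1$; thus the endpoint pairs of $\sigma_1,\dots,\sigma_{n-1}$ are nested, and by~(6) the $\sigma_i$ cross neither each other nor any $\ell_j$. Consequently the pairwise disjoint nested arcs $\sigma_1\supset\cdots\supset\sigma_{n-1}$ cut $D$ into $n$ regions $A_1,\dots,A_n$ whose adjacency graph is the path with edge $\sigma_i$ joining $A_i$ and $A_{i+1}$. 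I would then check that the two $\Gamma$-endpoints of each $\ell_i$ lie on the boundary arc of $A_i$, so that the portion of $\ell_i$ inside $D$ --- being forbidden from crossing any $\sigma_j$ --- is trapped in $\overline{A_i}$; hence every crossing of $\ell$ with $\ell_i$ lies in $A_i$.

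Finally I would finish with a trail argument. The arc $\ell$ lies in $D$ and, by~(1) and~(6), meets $\sigma_i$ exactly in its single crossing with $s^2_i$; treating the sequence of regions visited by $\ell$ as a walk on the path $A_1\!-\!\cdots\!-\!A_n$ that uses each edge exactly once, this walk is an Eulerian trail on a path and is therefore monotone, running from $A_1$ to $A_n$ or the reverse. Since all crossings of $\ell$ with $\ell_i$ sit in $A_i$, and $\ell$ visits $A_1,\dots,A_n$ in order, the crossings appear along $\ell$ in the order $\ell_1,\dots,\ell_n$ or its reverse, as claimed. I expect the main obstacle to be the topological bookkeeping of the first two steps --- rigorously producing the Jordan curve $\Gamma$, showing each curve lies on its intended side of $\Gamma$, and confirming that the separators genuinely partition $D$ along a path --- since the crossing conditions must be used carefully to prevent curves from escaping their intended regions; once the picture is clean, the concluding trail argument is essentially immediate. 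A secondary point to pin down is that each relevant ``crosses'' in the hypotheses denotes a single transversal crossing, so that $\ell$ meets each $\sigma_i$ exactly once; here I would either invoke the general-position convention standard for these gadgets or argue the required parities directly.
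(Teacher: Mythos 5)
First, a point of comparison: the paper does not actually prove this lemma --- it is imported verbatim from Schaefer~\cite{orderingGadget} --- so there is no in-paper proof to match against; your ring-plus-nested-separators plan does follow the standard approach behind the cited proof. The issue is that, as written, your argument asserts placement facts that the hypotheses do not provide and that are false in general. Each $c_j$ consists of the rim sub-arc you use in $\Gamma$ plus two \emph{tails} beyond its crossings with $c_{j-1}$ and $c_{j+1}$, and nothing in conditions (3) and (5) forces the crossings of $\ell_i$ with $c_{2i}$, or of $s^1_i$ with $c_{2i+1}$, to lie on the rim portion: they may lie on a tail, and each tail, being disjoint from $\Gamma$ except at its attachment corner, may lie inside or outside $D$. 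Hence your claims that ``$\ell_i$ meets $\Gamma$ only at $c_{2i}$ and $c_{4n-2i+2}$'' and that $\sigma_i=s^1_i\cup s^2_i\cup s^3_i$ ``forms a simple arc in $\overline{D}$ joining the point on $c_{2i+1}$ to the point on $c_{4n-2i+1}$'' are unjustified; as stated $\sigma_i$ need not touch $\Gamma$ at all, in which case it separates nothing, and the partition of $D$ into the path of regions $A_1,\dots,A_n$ --- the engine of the whole proof --- does not follow. The same hole recurs in your final step: the crossing of $\ell$ with $s^2_i$ may lie on a portion of $s^2_i$ beyond its crossings with $s^1_i$ and $s^3_i$, i.e.\ not on the simple separator arc you extract from $\sigma_i$, so ``the walk uses each edge exactly once'' is again an assumption rather than a consequence. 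Also, your one-line reason that $\ell\subset D$ is circular (it presupposes that the crossings with the $\ell_i$ and $s^2_i$ happen in $D$); a priori $\ell$ may lie in the unbounded component, where the argument survives only because an arc in the closed annulus with both endpoints on $\Gamma$ still separates --- a case that has to be treated.

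All of this is repairable, but it is exactly the bookkeeping you deferred, and deferring it is not covered by your proposal because the specific intermediate claims you commit to fail without it. A correct version extends each separator through sub-arcs of the tails of $c_{2i+1}$ and $c_{4n-2i+1}$ so that its endpoints are corners of $\Gamma$ (tails cannot recross $\Gamma$, since every crossing of $c_j$ with another $c$-curve is already accounted for by condition (2) and (6)), handles the inside/outside dichotomy once and for all, and replaces ``each edge exactly once'' by the stronger deduction: $\ell$ crosses each separator \emph{at most} once, yet must reach every $A_j$ because it crosses $\ell_j$, whose portion in the relevant component is trapped in $\overline{A_j}$; this forces exactly one, hence monotone, passage through each separator. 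Finally, the single-crossing convention you flag in your last sentence is not an optional nicety: conditions (1) and (3)--(5) say only ``crosses'', and if $\ell$ may cross some $s^2_i$ twice, the monotonicity argument (and indeed the precise meaning of the conclusion) collapses. In this paper's application the curves are straight segments, where at most one crossing per pair is automatic, but a proof of the lemma as stated must pin that convention down rather than mention it in passing.
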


\begin{figure}
	\centering
	\includegraphics[width=.8\textwidth]{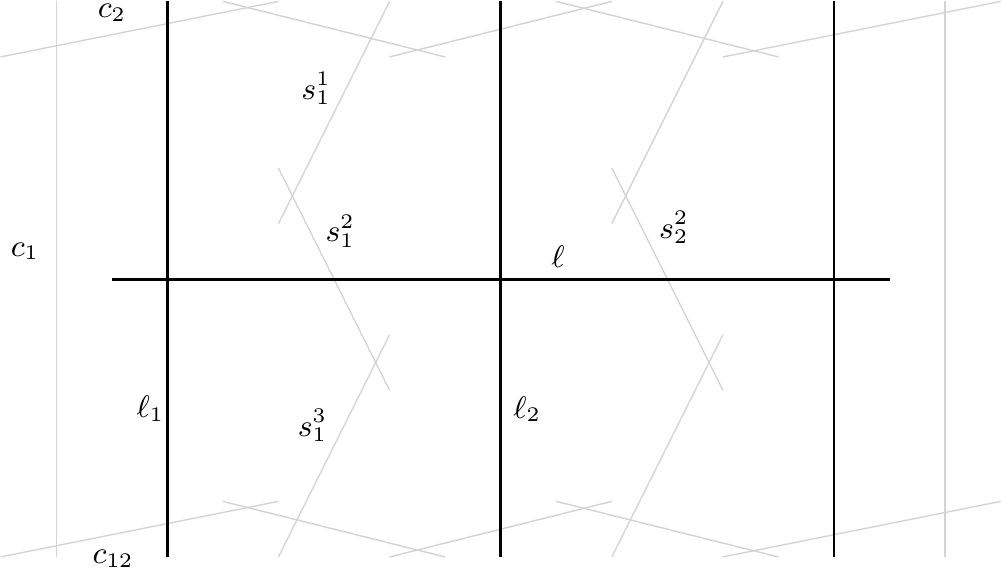}
	\caption{Construction in Lemma~\ref{lemma:orderingGadget} for $n=3$.}
	\label{fig:ordGadgetSchematic}
\end{figure}

We call the collection of curves from Lemma~\ref{lemma:orderingGadget} an \emph{ordering gadget}. We further define a graph~$H$, to which we will be referring in this section, as follows. First take the intersection graph of ordering gadget from Lemma~\ref{lemma:orderingGadget} with $n=3$. We add to the graph two new vertices, $a$ and $b$, such that (i) $a$ is adjacent to  $b,\, \ell,\,\ell_1,\,s^1_1,\,\ell_2,\,\mathrm{and}\ s^3_2$, and (ii) $b$ is adjacent to $a,\, \ell,\,\ell_1,\,s^3_1,\,\ell_2,\,\mathrm{and}\ s^1_2$. This finishes~the description of the graph~$H$. As shown in Figure~\ref{fig:ordGadgetSchematicAB}, the graph $H$ is a segment intersection graph. For simplicity, we denote the segments (or vertices) $\ell_1$ and $s^2_1$ by $c$ and $z$ respectively, and notice that the segments $a,\,b$, and $c$ define a triangle ($K_3$) in the graph. In any intersection model of $H$ with segments, there is a unique (geometric) triangle contained in the union of $a,\,b,\,c$.

\begin{figure}
	\centering
	\includegraphics[width=.8\textwidth]{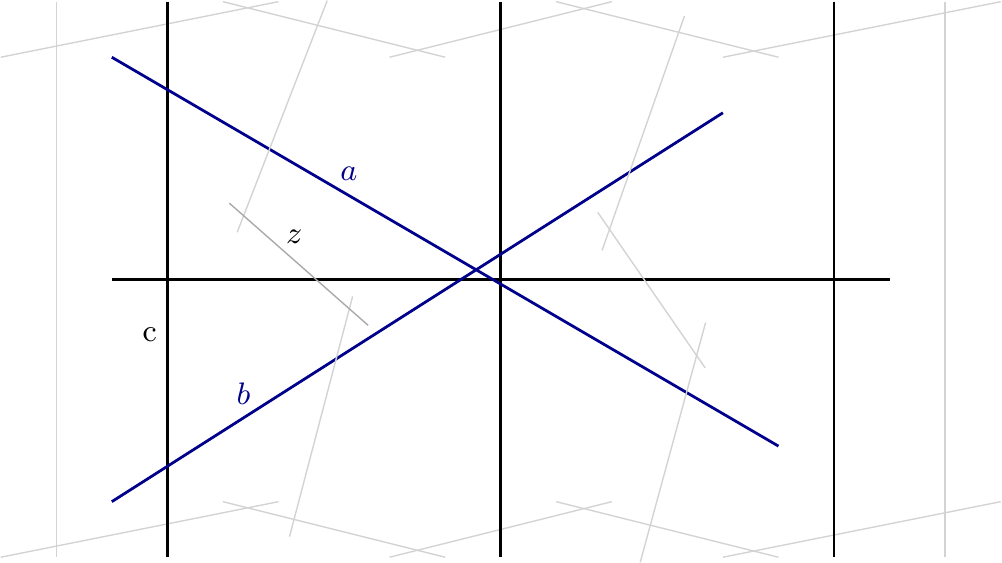}
	\caption{Intersection model with segments for the graph~$H$.}
	\label{fig:ordGadgetSchematicAB}
\end{figure}

\begin{lemma}
\label{lemma:triangle}
In any intersection model of~$H$ with segments, the segment~$z$ is contained in the triangle defined by $a,\,b,\,c$.
\end{lemma}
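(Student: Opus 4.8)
The plan is to reduce the claim to a single incidence. In $H$ the vertex $z=s^2_1$ is adjacent to none of $a$, $b$, $c=\ell_1$, so in any segment model $z$ is disjoint from $a\cup b\cup c$, hence from the boundary of the triangle $T$ defined by $a,b,c$, with vertices $P=a\cap b$, $Q=a\cap c$, $R=b\cap c$ (the three sides of $T$ being subsegments of $a,b,c$). Since $z$ and $T$ are convex and $z$ meets $\partial T$ nowhere, $z$ lies entirely inside $T$ or entirely outside it. Thus it suffices to produce one point of $z$ in the interior $\Int(T)$; the natural candidate is the crossing point $z\cap\ell$, since $\ell$ is adjacent to $z$.

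Next I would record the incidences that tie $z$ to the three sides of $T$: the segment $\ell$ meets both $z$ and $c$; the segment $s^1_1$ meets $z$ and $a$ but neither $b$ nor $c$; and $s^3_1$ meets $z$ and $b$ but neither $a$ nor $c$. So from $z$ there emanate three segments, one reaching each side, with $s^1_1$ and $s^3_1$ avoiding the two sides they do not target. One would like to conclude immediately that these reaching segments hit the genuine sides of $T$, rather than the prolongations of $a,b,c$ beyond the vertices $P,Q,R$, which would place $z$ inside. This is the crux, and it is \emph{not} forced by the incidences among $\{a,b,c,z,\ell,s^1_1,s^3_1\}$ alone: if $a$ and $b$ are allowed to protrude far past the vertices of $T$, one can route $s^1_1$ and $s^3_1$ to those prolongations and slide $z$ outside $T$ while still respecting every crossing in this local set. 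Hence the global rigidity of the ordering gadget must be used.

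To supply that rigidity I would invoke Lemma~\ref{lemma:orderingGadget}, which fixes, up to reflection, the order in which $\ell_1=c,\ell_2,\ell_3$ cross $\ell$, and with it the cyclic arrangement of the whole configuration drawn in Figure~\ref{fig:ordGadgetSchematicAB}. The auxiliary adjacencies of $a$ and $b$ — each crosses $\ell$, $c$, and $\ell_2$ but not $\ell_3$, while $a$ crosses $s^1_1,s^3_2$ and $b$ crosses $s^3_1,s^1_2$ — are precisely the incidences the gadget controls, and they are what prevents $a$ and $b$ from reaching around $T$ to its far side. With the order along $\ell$ pinned down, the target is to read off that $c\cap\ell$ and one of $a\cap\ell,b\cap\ell$ are the two points where $\ell$ enters and leaves $T$, and that $z\cap\ell$ lies between them along $\ell$; this exhibits $z\cap\ell$ as an interior point of $T$, and the reduction of the first paragraph then yields $z\subset T$.

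The hard part will be exactly the step excluded by the local counterexample above: converting the purely combinatorial ordering handed down by the gadget into metric control of where $a,b,c,z$ meet $\ell$ relative to $T$, so as to force the relevant crossings onto the true sides $PQ$, $PR$, $QR$ rather than their extensions. This is where the specific wiring of $a$ and $b$ into the gadget — their adjacency to $\ell_2$ and to $s^1_2,s^3_2$, and their non-adjacency to $\ell_3$ — has to be exploited in full; once the enter/leave crossings of $\ell$ on $\partial T$ are located, everything else is the routine convexity-and-connectivity bookkeeping of the first paragraph.
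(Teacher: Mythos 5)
Your opening reduction is sound: $z$ is non-adjacent to $a$, $b$, $c$, hence misses $\partial T$, so by connectedness it suffices to place a single point of $z$ in $\Int(T)$; and your diagnosis that the incidences among $\{a,b,c,z,\ell,s^1_1,s^3_1\}$ alone cannot force this is also correct. But the proposal then stops at precisely the step that constitutes the proof: you never derive that $z\cap\ell$ (or any point of $z$) lies in $T$, you only name the place where such a derivation would have to happen. Moreover, the route you sketch cannot be completed as stated. Lemma~\ref{lemma:orderingGadget} constrains only the order in which $c=\ell_1,\,z,\,\ell_2,\,s^2_2,\,\ell_3$ cross $\ell$ (this much follows because the cycle $c_1,\dots,c_{12}$ contains a closed Jordan curve confining $\ell$, $z$ and $s^2_2$ to one face, and similarly for inner faces). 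The segments $a$ and $b$ are \emph{not} curves of the ordering gadget, so the lemma says nothing about where $a\cap\ell$ and $b\cap\ell$ fall along $\ell$ relative to $c$ and $z$ --- and that is exactly the ``enter/leave'' information your plan needs to locate $\partial T$.

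The missing idea, which the paper supplies, is a second topological step beyond the ordering lemma. Let $F'$ be the face bounded by the closed Jordan curve contained in $\bigcup_{i=1}^{3}\{s^i_1,\,s^i_2\}\cup\bigcup_{i=3}^{5}c_i\cup\bigcup_{i=9}^{11}c_i$ (Figure~\ref{fig:ordGadgetSchematicFaces}). On $\partial F'$ the six segments appear in the cyclic order $s^1_1,\,s^1_2,\,s^2_2,\,s^3_2,\,s^3_1,\,s^2_1$, and among them $a$ crosses only $s^1_1$ and $s^3_2$, $b$ only $s^1_2$ and $s^3_1$, and $\ell$ only $s^2_1=z$ and $s^2_2$; these three attachment pairs pairwise \emph{interleave} in the cyclic order, so the chords of $a$, $b$, $\ell$ through $F'$ pairwise cross inside $F'$. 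This pins the three crossings $a\cap b$, $a\cap\ell$, $b\cap\ell$ into $F'$ while $\ell\cap c$ stays outside it; combined with the fact that $\ell$ meets $c$ and $z$ consecutively, it shows that the portion of $F'$ inside the triangle has $z$ on its boundary, after which your convexity reduction finishes the argument. Some such global alternation-on-a-face argument (or an equivalent) is indispensable here; without it your proposal is an accurate diagnosis of the difficulty rather than a proof.
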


\begin{proof} The set $\bigcup_{i\in[12]}c_i$ contains a unique closed Jordan curve $\gamma$ that separates the plane in two faces. Since $\ell,\,z,\,\mathrm{and}\ s^2_2$ do not cross any of the curves $c_i$, they must all lie in the same face~$F$ defined by $\gamma$. Considering the face inside~$F$ defined by the unique closed Jordan curve contained in the union of $\left\{\ell_1,\ell_2\right\}\cup\bigcup_{i=2}^4 c_i\cup\bigcup_{i=10}^{12}c_i$, one can argue that the conditions and the conclusion in Lemma~\ref{lemma:orderingGadget} imply that the segment $\ell$ must cross the segments $c=\ell_1,\,z,\,\ell_2$ in that order or in the reverse order. Similarly $\ell$ must cross the segments 
$\ell_2,\,s^2_2,\,\ell_3$ in that order or in the reverse order. We conclude that~$\ell$ crosses the 
other segments in the order $c,\,z,\,\ell_2,\,s^2_2,\,\ell_3$ (or in the reverse order) in the ordering gadget. (In fact this is used in the proof~\cite{orderingGadget} of Lemma~\ref{lemma:orderingGadget}.)

Let $F'$ be the face contained in $F$ defined by the unique closed Jordan curve contained in $\bigcup_{i=1}^3\left\{s^i_1,\,s^i_2\right\}\cup\bigcup_{i=3}^5c_i\cup\bigcup_{i=9}^{11}c_i$. See Figure~\ref{fig:ordGadgetSchematicFaces}. All three pairwise crossings between $a$, $b$, and $\ell$ must occur in $F'$ because the segments crossed by $a$, $b$, and $\ell$ alternate on the boundary of $F'$. In particular, the triangle defined by $a$, $b$, and $c$ contains a portion of $F'$. This portion has $z=s^2_1$ on the boundary because $\ell$ crosses $c$ and $z$ consecutively, after (or before) crossing $a$ or $b$. Therefore, the triangle defined by $a$, $b$, and $c$ must contain $z$.
\end{proof}

\begin{figure}
	\centering
	\includegraphics[width=.8\textwidth]{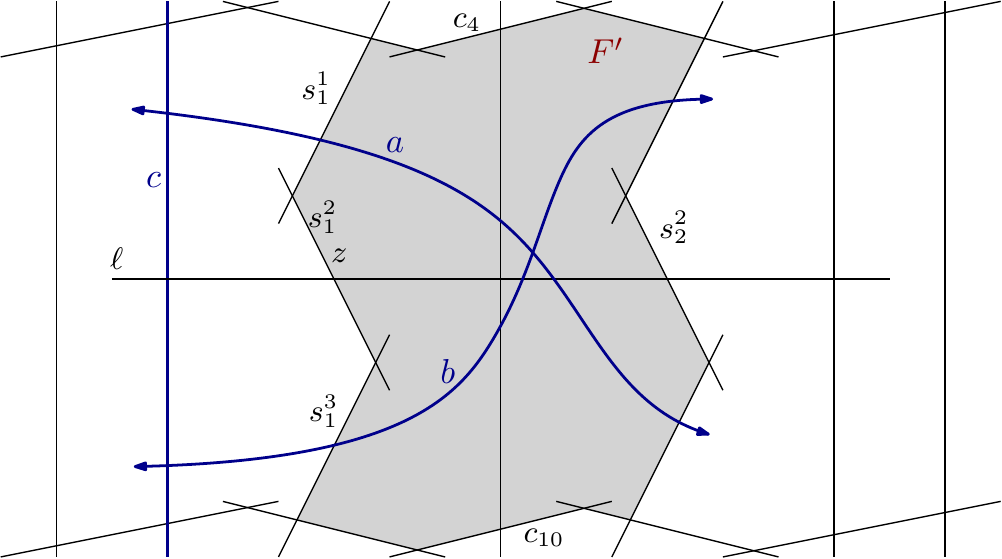}
	\caption{Face~$F'$ used in the proof of Lemma~\ref{lemma:triangle}.}
	\label{fig:ordGadgetSchematicFaces}
\end{figure}

On the one hand, $H$ is a segment graph, as shown in Figure~\ref{fig:ordGadgetSchematicAB}. On the other hand, Lemma~\ref{lemma:triangle} implies $H$ cannot be a unit-segment intersection graph because we cannot have a unit-length segment inside the geometric triangle defined by three unit-length segments. We will generalize this below to obtain a finer classification.

\begin{lemma}
\label{lemma:ordGadgetUnitSegs}
The graph~$H$ has an intersection model with segments such that: all segments but $z$ have unit length, the segment~$z$ is contained in the triangle defined by $a,\,b,\,c$, and the distance from any point on $z$ to any point on $a\cup b\cup c$ is at least $C$~times the length of~$z$, where $C$ is an arbitrary constant we can choose.
\end{lemma}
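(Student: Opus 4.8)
The plan is to start from the concrete segment model of $H$ shown in Figure~\ref{fig:ordGadgetSchematicAB}, which already realises all the required crossings, and then modify it so that every segment except $z$ has unit length while simultaneously shrinking $z$ to be arbitrarily small relative to the triangle defined by $a,b,c$. The key observation is that the combinatorial structure of the ordering gadget, together with the two extra vertices $a$ and $b$, is a finite intersection pattern: there are finitely many segments and finitely many required (and forbidden) crossings. Because crossings between line segments are an open condition — if two segments cross transversally, any sufficiently small perturbation of their endpoints preserves the crossing, and likewise non-crossing pairs that are at positive distance stay disjoint under small perturbations — I would first argue that there is an $\varepsilon$-neighbourhood in the space of endpoint configurations within which the entire intersection graph is preserved.

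With that robustness in hand, the first concrete step is to rescale all the segments to unit length. Starting from the model in Figure~\ref{fig:ordGadgetSchematicAB}, each segment has some length $\lambda_i>0$; I would rotate and translate each segment about its relevant crossing point so that, after adjustment, it has length exactly $1$ while still passing through the same crossing region. Since each segment participates in only finitely many crossings, all of which are transversal and occur at interior points, there is enough slack to lengthen or shorten each segment to the common unit length without destroying the combinatorial pattern; formally this is another application of the open-condition argument, carried out segment by segment. This yields a model in which all segments, including $z$, have unit length, realising $H$ as a unit-segment graph — except that we want $z$ to be the one special short segment.

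The second concrete step, which I expect to be the main obstacle, is to make $z$ short and to push it far from $a\cup b\cup c$ relative to its own length, i.e. to guarantee that the distance from $z$ to $a\cup b\cup c$ is at least $C$ times the length of $z$ for a prescribed constant $C$. By Lemma~\ref{lemma:triangle}, $z$ must lie inside the triangle determined by $a,b,c$, so the correct mechanism is not to move $z$ out of the triangle but to shrink it. The crossings that $z$ must make are with $\ell$ and with $s^1_1$ and $s^3_1$; I would arrange, using the freedom from step one, for these three curves to pass through a common small disk $D$ in the interior of the $a,b,c$ triangle that is at distance at least (say) $2C\cdot\mathrm{diam}(D)$ from $a\cup b\cup c$. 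Then I replace $z$ by a unit segment only in the scaled-down picture: concretely, I would shrink the local geometry around $D$, placing a very short segment $z$ of length $\delta$ inside $D$ that still meets all three curves, and choose $\delta$ small enough that $\delta\cdot C$ is less than the distance from $D$ to $a\cup b\cup c$. The delicate point is that shrinking $z$ must not disturb the crossings of $\ell, s^1_1, s^3_1$ with the rest of the gadget; this is handled by keeping those three segments unit-length and only confining their mutual meeting point to the small region, rather than altering them globally.

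Finally I would verify that the whole construction is consistent: all segments other than $z$ have length $1$, the segment $z$ has length $\delta$ with $C\delta$ below the separation distance, and $z$ lies in the interior of the triangle as forced by Lemma~\ref{lemma:triangle}. The constant $C$ being arbitrary is accommodated by the single free parameter $\delta$, which can be taken as small as we like once the separation distance from the meeting disk $D$ to $a\cup b\cup c$ has been fixed by the unit-length placement. The essential difficulty throughout is bookkeeping: ensuring that the many simultaneous crossing and non-crossing constraints of the ordering gadget survive both the unit-length normalisation and the shrinking of $z$, which is why the open-condition perturbation lemma is the backbone of the argument rather than any single explicit coordinate computation.
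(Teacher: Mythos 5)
There is a genuine gap, and it surfaces already in your first step. Your ``unit-length normalisation'' is not an application of the open-condition argument: openness of the crossing predicates protects the combinatorial pattern only under \emph{small} perturbations of the endpoints, whereas rescaling a segment of length $\lambda_i$ to length $1$ is a large deformation when $\lambda_i$ is far from $1$. Lengthening a segment can create forbidden crossings with segments it previously missed, and shortening can destroy required ones; you give no argument that the model of Figure~\ref{fig:ordGadgetSchematicAB} has the ``slack'' you invoke, and segment-by-segment adjustment does not reduce to small perturbations. In fact your intermediate conclusion is provably false: you claim to obtain ``a model in which all segments, including $z$, have unit length, realising $H$ as a unit-segment graph,'' but by Lemma~\ref{lemma:triangle} the segment $z$ must lie strictly inside the triangle whose sides are subsegments of $a$, $b$, $c$, so $z$ is strictly shorter than a unit segment --- this is exactly the paper's observation (stated right after Lemma~\ref{lemma:triangle}) that $H$ is \emph{not} a unit-segment graph. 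A method that derives an impossible configuration cannot be repaired by saying ``except that we want $z$ to be the one special short segment''; the flaw shows the perturbation argument cannot deliver the unit-length model at all.

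What the lemma actually requires is an \emph{explicit} model in which every segment other than $z$ has unit length, and the paper supplies exactly that: its entire proof is the concrete drawing of Figure~\ref{fig:ordGadgetUnit}, where all segments but $z$ are unit and $z$ shrinks arbitrarily as $s^1_1$ and $s^3_1$ are brought together. Your second step --- confining the mutual crossings of $\ell$, $s^1_1$, $s^3_1$ (the only neighbours of $z$) to a tiny region deep inside the $a,b,c$ triangle and placing a short $z$ of length $\delta$ there, with $C\delta$ below the separation distance --- is the correct mechanism and matches the paper's construction. But it presupposes that a unit-length realisation of the ordering gadget together with $a$ and $b$ exists and can be arranged with that tiny meeting region; that existence is the whole content of the lemma, and in your write-up it rests entirely on the invalid normalisation step. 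To fix the proof you would need to exhibit such a model directly (placing the cycle $c_1,\dots,c_{12}$, the curves $\ell,\ell_1,\ell_2,\ell_3$, the $s^j_i$, and $a,b$ as unit segments with precisely the prescribed crossings), after which your shrinking argument for $z$ and the choice of $\delta$ relative to $C$ go through.
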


\begin{proof}
	See Figure~\ref{fig:ordGadgetUnit}.
\end{proof}

\begin{figure}
  \centering
  \includegraphics[width=0.5\textwidth]{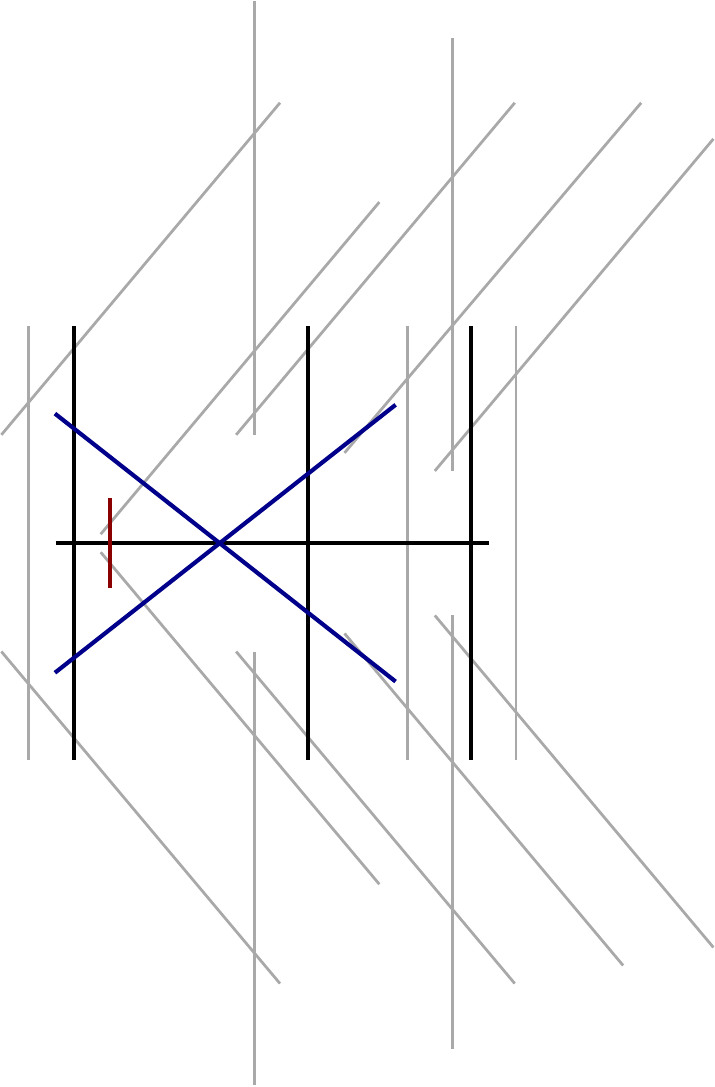}
  \caption{Intersection model of~$H$ where all segments but $z$ have unit length. The segment~$z$ can be made arbitrarily small by bringing $s^1_1$ and $s^3_1$ closer.}
   \label{fig:ordGadgetUnit}
\end{figure}

\begin{thm}
For any natural number~$k$, the class of $k$-length-segment graphs is a strict subclass of $(k+1)$-length-segment graphs.
\end{thm}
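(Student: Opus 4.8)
The plan is to exhibit, for each $k$, a graph $G_k$ that is a $(k+1)$-length-segment graph but not a $k$-length-segment graph; since every $k$-length-segment graph is trivially a $(k+1)$-length-segment graph, this yields the strict containment. The gadget $H$ of Lemmas~\ref{lemma:triangle} and~\ref{lemma:ordGadgetUnitSegs} already settles the base case $k=1$: by Lemma~\ref{lemma:ordGadgetUnitSegs} it has a model using only two lengths (unit, together with the tiny segment $z$), whereas Lemma~\ref{lemma:triangle} forces $z$ to lie strictly inside the triangle defined by $a,b,c$, and a segment lying strictly inside the triangle spanned by three unit segments has length strictly below the triangle's diameter, which is at most $1$; hence no unit-segment model can exist.

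For general $k$ I would build $G_k$ by chaining $k$ copies $H^{(1)},\dots,H^{(k)}$ of the gadget so that, in \emph{any} segment model, one is forced to read off a nested chain of segments $z^{(0)},z^{(1)},\dots,z^{(k)}$ of strictly decreasing lengths. Concretely, the inner segment $z^{(i)}$ produced by copy $i$ is wired to play the role of the base segment $c$ of the triangle in copy $i+1$, while the remaining two triangle segments of copy $i+1$ are constrained (again through the ordering-gadget machinery of Lemma~\ref{lemma:orderingGadget}) to lie in a small region around $z^{(i)}$. Applying Lemma~\ref{lemma:triangle} to each copy then places $z^{(i+1)}$ strictly inside the triangle of copy $i+1$, whose diameter is governed by $|z^{(i)}|$, giving $|z^{(0)}|>|z^{(1)}|>\cdots>|z^{(k)}|$ in every model. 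Strictly decreasing lengths are pairwise distinct, so every segment model of $G_k$ uses at least $k+1$ distinct lengths, i.e.\ $G_k$ is not a $k$-length-segment graph.

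For the matching upper bound I would realize $G_k$ with exactly $k+1$ lengths by assigning all segments of copy $i$ a common length $L_i$, with $L_0>L_1>\cdots>L_k$, and constructing the model level by level. Here the freedom in the constant $C$ of Lemma~\ref{lemma:ordGadgetUnitSegs} is the crucial tool: choosing $C$ large enough at level $i$ forces $z^{(i)}$ both to be much shorter than the level-$i$ segments and to sit at distance at least $C|z^{(i)}|$ from the level-$i$ triangle, leaving an empty neighbourhood of $z^{(i)}$ of the appropriate scale in which the entire copy $i+1$, rescaled to size $\Theta(L_{i+1})$, can be embedded without creating spurious crossings between the two levels.

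The main obstacle I expect is the rigorous propagation of the strict length decrease through the chain: I must design the wiring between consecutive copies so that, in an \emph{arbitrary} model (not merely the intended one), the diameter of the level-$(i+1)$ triangle is genuinely bounded by $|z^{(i)}|$ rather than by some long auxiliary segment, and so that the crossings forced by copy $i+1$ cannot be rerouted to escape the region carved out by copy $i$. Controlling these interactions, while simultaneously checking that the assembled $G_k$ remains a single well-defined segment graph whose ordering gadgets do not interfere with one another, is the delicate part; the metric guarantee of Lemma~\ref{lemma:ordGadgetUnitSegs} is precisely what should make both the length-forcing argument and the non-interference in the realization go through.
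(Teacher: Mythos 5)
Your construction matches the paper's in outline (chain $k$ copies of $H$, identify the inner segment $z$ of copy $i$ with a triangle segment of copy $i{+}1$, and use the freedom in the constant $C$ of Lemma~\ref{lemma:ordGadgetUnitSegs} to nest a rescaled copy and realize the graph with exactly $k+1$ lengths), and your base case $k=1$ is sound. But the lower-bound step you flag as ``the main obstacle'' is a genuine gap, and as written it would fail: in an arbitrary model nothing forces the two remaining triangle segments of copy $i{+}1$ to stay ``in a small region around $z^{(i)}$,'' so the triangle of copy $i{+}1$ can have diameter far exceeding $|z^{(i)}|$ (its other two sides could be nearly as long as the segments of copy $i$), and then $|z^{(i+1)}| < |z^{(i)}|$ is simply not implied. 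The extra ordering-gadget wiring you invoke to pin those segments down is left unspecified, and it is not clear such metric control can be enforced by intersection constraints at all.

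The paper sidesteps this entirely with a combinatorial containment argument that you are missing: since $G_k$ has \emph{no} edges between $H_{i+1}$ and $\{a_i,b_i,c_i\}$, no segment of copy $i{+}1$ may cross the boundary of the triangle $T_i$, and since $H_{i+1}$ is connected and its shared segment $z_i=a_{i+1}$ lies inside $T_i$ by Lemma~\ref{lemma:triangle}, \emph{every} segment of copy $i{+}1$ is trapped inside $T_i$. One then compares maxima rather than the $z$-lengths: the longest segment length $\lambda_{i+1}$ of copy $i{+}1$ is strictly below the longest edge of $T_i$, hence below $\lambda_i$, giving $\lambda_1>\lambda_2>\cdots>\lambda_k$ in every model, with Lemma~\ref{lemma:triangle} applied to the last copy supplying a $(k{+}1)$-st length $\lambda_k'<\lambda_k$. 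So your chain of strictly decreasing $z^{(i)}$ is both unobtainable as stated and unnecessary: replace it with ``the whole of copy $i{+}1$ lies inside $T_i$, so the maximum length strictly decreases,'' and your argument closes without any additional gadgetry. (Note also that the metric guarantee of Lemma~\ref{lemma:ordGadgetUnitSegs} is needed only for the realizability direction, to keep consecutive copies from creating spurious intersections with $a_i,b_i,c_i$, not for the impossibility direction.)
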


\begin{proof}
For each~$k$, we define a graph~$G_k$ such that $G_k$ can be realized as an intersection graph of segments of $k+1$ different lengths, and cannot be realized as an intersection graph of segments of $k$ different lengths.

We define $G_k$ as follows. We fix an intersection model~$\mathcal{F}$ for $H$ as described in Lemma~\ref{lemma:ordGadgetUnitSegs}, with the constant $C$ large enough. Since all the segments in $\mathcal{F}$, but $z$, have unit length and the graph $H$ has small diameter, all the segments of $\mathcal{F}$ are contained in a disk of radius 20.

We make disjoint copies $\mathcal{F}_1,\,\ldots,\,\mathcal{F}_k$ of the family~$\mathcal{F}$. For each $i\in [k]$, we denote by $a_i,\,b_i,\,c_i,\,z_i$ the copies of $a,\,b,\,c,\,z$ in $\mathcal{F}_i$, respectively.
Then, we define families of segments $\mathcal{F}'_1,\,\ldots,\,\mathcal{F}'_k$ inductively.
We take $\mathcal{F}'_1= \mathcal{F}_1$, $a'_1=a_1$, $b'_1=b_1$, $c'_1=c_1$ and $z'_1=z_1$. For each $i\in [k-1]$, we define $\mathcal{F}'_{i+1}$ by scaling and applying a rigid motion to $\mathcal{F}_{i+1}$ such that the segment $a_{i+1}$ becomes the segment $z'_{i}$ of $\mathcal{F}'_{i}$. We denote by $a'_{i+1},\,b'_{i+1},\,c'_{i+1},\,z'_{i+1}$ the transformed version of $a_{i+1},\,b_{i+1},\,c_{i+1},\,z_{i+1}$ in $\mathcal{F}'_{i+1}$. Note that some segments of $\mathcal{F}'_{i+1}$ intersect some segments of $\mathcal{F}'_{i}$. However, since all the segments of $\mathcal{F}_{i+1}$ are contained in a disk of radius 20 times the length of $a'_{i+1}=z'_{i}$, we can take the constant~$C$ in Lemma~\ref{lemma:ordGadgetUnitSegs} large enough so that no segment of $\mathcal{F}'_{i+1}$ can intersect $a'_i$, $b'_i$, or $c'_i$.
Finally, we take $G_k$ to be the intersection graph of $\bigcup_{i\in[k]}\mathcal{F}'_i$. (Whether we keep both $a_{i+1}$ and $z_i$ or only one of them is not very relevant. For the discussion, it is more convenient that we identify them.)

At the level of abstract graphs, the construction of $G_k$ can be seen as taking $k$ disjoint copies $H_1,\,\ldots,\,H_k$ of $H$, and identifying the copy $z_i$ of $z$ in $H_i$ with the copy $a_{i+1}$ of $a$ in $H_{i+1}$ (where $i=1,\,\ldots,\,k-1$). We also need some additional edges between the vertices of $H_{i}-\{a_i,\,b_i,\,c_i\}$ and $H_{i+1}$ to make an intersection model with segments possible, but they are not relevant in the discussion. It is only important that there is no edge between $\{a_i,\,b_i,\,c_i\}$ and $H_{i+1}$ for each $i\in [k-1]$.

The graph~$G_k$ is a $(k+1)$-length-segment graph by construction. Indeed, the lengths of segments in $\mathcal{F}_i$ decrease with $i$, and, for each $i\in[k]$, the family  $\bigcup_{j\in[i]}\mathcal{F}_j$ has segments of exactly $i+1$ different lengths.

It remains to show that the graph~$G_k$ is not a $k$-length-segment graph. Consider an intersection model of $G_k$ with segments. For each $i\in[k]$, let $\tilde\mathcal{F}_i$ be the restriction of the model to the graph~$H_i$, let $\lambda_i$ be the length of the longest segment in $\tilde\mathcal{F}_i$, and let $T_i$ be the geometric triangle defined by the segments that correspond to $a_i,\,b_i,\,c_i$.
For each $i\in [k]$, Lemma~\ref{lemma:triangle} implies that the segment for $z_{i}$ is contained in $T_i$. Since $z_i=a_{i+1}$, the graph~$H_{i+1}$ is connected, and there are no edges between $H_{i+1}$ and $\{a_i,\,b_i\,,c_i\}$, all the segments of $\tilde\mathcal{F}_{i+1}$ are contained in $T_i$. It follows that the longest segment in $\tilde\mathcal{F}_{i+1}$ has to be shorter than the longest edge of the triangle~$T_i$, which in turn is shorter than the longest segment in $\tilde\mathcal{F}_{i}$. Thus we have $\lambda_1>\lambda_2>\cdots>\lambda_k$. Moreover, Lemma~\ref{lemma:triangle} for $\tilde\mathcal{F}_k$ implies that $\tilde\mathcal{F}_k$ has segments of at least 2 different lengths. This is, $\tilde\mathcal{F}_k$ has a segment of length $\lambda_k$, and another segment of length $\lambda'_k<\lambda_k$. With this we have shown that the intersection model has at least the $k+1$ different lengths $\lambda_1>\cdots>\lambda_k>\lambda'_k$.
\end{proof}

\section{Disk graphs}
\label{sec:disk}

In this section we show that the class of $k$-size-disk graphs is a proper subclass of the $(k+1)$-size-disk graphs. In particular, this implies that the class of unit-disk graphs is a proper subclass of the class of disk graphs. An alternative way to show the separation between disks and unit disks is given by McDiarmid and M{\"{u}}ller~\cite{MM14}, where they provide near-tight bounds on the number of disk and unit-disk graphs with $n$ vertices. It follows from their result that there are (many) disk graphs that are not unit-disk graphs.

The argument here is much simpler and uses the following folklore result.

\begin{obs}
\label{obs:circlePacking}
The star $K_{1,\,6}$ is a disk graph, it has an intersection model with disks of two sizes, but it is not a unit-disk graph. In any intersection model of $K_{1,\,6}$ with disks, at least one of the non-central vertices must be represented by a disk of strictly smaller size than the central disk.
\end{obs}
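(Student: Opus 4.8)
The plan is to establish the three assertions in order, treating the last one (every disk model forces some leaf smaller than the centre) as the main content, since the non-unit-disk claim is then the special case of equal radii. For the realisability part I would exhibit an explicit two-size model: take a large central disk of radius $R$ centred at the origin and six small disks of radius $\rho$ whose centres sit at the vertices of a regular hexagon at distance $R+\rho$ from the origin. Each small disk is tangent to (hence meets) the central disk, while two neighbouring small centres are at distance $R+\rho$, which exceeds $2\rho$ as soon as $R>\rho$; thus the six small disks are pairwise disjoint and we obtain exactly $K_{1,6}$ with disks of two sizes.

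For the structural statement I would argue by contradiction. Fix any disk model: let the central vertex be represented by a disk of radius $r_0$ centred at $p_0$, and the six leaves by disks of radii $r_i$ centred at $p_i$, so that adjacency to the centre gives $d_i:=|p_i-p_0|\le r_0+r_i$ and pairwise non-adjacency of the leaves gives $|p_i-p_j|>r_i+r_j$ (a shared point would create an edge). Suppose, for contradiction, that $r_i\ge r_0$ for every leaf $i$. If some $p_i=p_0$ then that leaf disk contains the whole central disk, hence meets every other leaf disk that meets $D_0$, contradicting independence of the leaves; so all six directions $\theta_i$ from $p_0$ to $p_i$ are well defined. By pigeonhole on the six angular gaps, which sum to $360^\circ$, two leaves $i,j$ satisfy $\psi:=\angle p_i p_0 p_j\le 60^\circ$.

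The heart of the argument is to show that this pair of leaves must then intersect. Writing $M:=r_i+r_j$ and assuming without loss of generality $r_i\ge r_j$, the assumption $r_j\ge r_0$ yields $d_i\le r_0+r_i\le r_j+r_i=M$ and likewise $d_j\le M$; this is precisely the step where $\min(r_i,r_j)\ge r_0$ is used. Since $\psi\le 60^\circ$ gives $\cos\psi\ge \tfrac12$, the law of cosines yields
\[
|p_i-p_j|^2 \;=\; d_i^2+d_j^2-2d_id_j\cos\psi \;\le\; d_i^2+d_j^2-d_id_j \;\le\; \max(d_i,d_j)^2 \;\le\; M^2,
\]
where the middle inequality is the elementary fact that $x^2-xy+y^2\le\max(x,y)^2$ for $x,y\ge 0$. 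Hence $|p_i-p_j|\le r_i+r_j$, so $D_i$ and $D_j$ share a point, contradicting the non-adjacency of the two leaves. This contradiction shows that not all $r_i$ can be $\ge r_0$, i.e.\ at least one leaf disk is strictly smaller than the central disk. Finally, a unit-disk model would have $r_i=r_0$ for all $i$, which is exactly the excluded case, so $K_{1,6}$ is not a unit-disk graph.

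The step I expect to require the most care is the bound on $|p_i-p_j|$: one must check that the two inequalities needed, $d_i,d_j\le r_i+r_j$ and $x^2-xy+y^2\le\max(x,y)^2$, combine to give a \emph{strict} contradiction with the strict disjointness $|p_i-p_j|>r_i+r_j$, and that the hypothesis $\min(r_i,r_j)\ge r_0$ is genuinely what makes $d_i,d_j\le r_i+r_j$ hold. The only other subtlety is the degenerate concentric configuration $p_i=p_0$, which is ruled out separately as above.
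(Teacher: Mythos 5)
Your proof is correct and uses essentially the same argument as the paper: by the pigeonhole principle two of the six directions from the central disk's centre span an angle of at most $60^\circ$, and the corresponding two leaf disks are then forced to intersect (the kissing-number phenomenon). Your writeup is in fact more complete than the paper's one-paragraph proof, which only spells out the equal-radius case and appeals to ``standard plane geometry''; you additionally verify the mixed-radius step $d_i,\,d_j\le r_i+r_j$ from $\min(r_i,\,r_j)\ge r_0$ via the law of cosines, rule out the degenerate concentric configuration, and exhibit the explicit two-size model that the paper leaves implicit.
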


\begin{proof} Standard plane geometry shows that if $6$ unit disks~$D_1,\,\ldots,\,D_6$ intersect a unit disk $D$, then some pair $D_i,\,D_j$ must also intersect. Indeed, the angle of the straight-line segments connecting the centre of~$D$ to the centres of~$D_1,\,\ldots,\,D_6$ must contain two segments that form an angle of at most $60$ degrees, and the two unit disks defining those segments must intersect. This is related to the \emph{kissing number} in the plane.
\end{proof}

\begin{thm}
For every natural number $k\ge 2$, the class of $(k-1)$-size-disk graphs is a proper subclass of $k$-size-disk graphs.
\end{thm}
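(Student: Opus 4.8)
The plan is to mimic the structure of the segment-graph proof: build a graph $G_k$ that forces, in \emph{every} disk representation, a strictly decreasing chain of $k$ disk radii (and hence $k$ distinct sizes), while admitting a representation with exactly $k$ sizes. The role played there by the nested-triangle gadget is played here by the star $K_{1,6}$ of Observation~\ref{obs:circlePacking}. Concretely, I would take $G_k$ to be the rooted complete $6$-ary tree of depth $k-1$: the root is at depth $0$, every node of depth $<k-1$ has exactly $6$ children, the leaves are the nodes of depth $k-1$, and edges join parents to children. For every internal node $u$, the subgraph induced by $u$ together with its $6$ children is exactly $K_{1,6}$, since the children are pairwise nonadjacent in a tree. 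For $k=2$ this is just $K_{1,6}$ itself, matching the base case given by the Observation.

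To see that $G_k$ is a $k$-size-disk graph, I would exhibit a representation in which every node of depth $d$ is a disk of one common radius $R_d$, with $R_0>R_1>\cdots>R_{k-1}$, so that exactly $k$ sizes occur. Place the root at the origin and attach its $6$ children as equal disks tangent to it from outside, spaced at $60^\circ$; these are pairwise disjoint as soon as $R_1<R_0$. Recurse: around each depth-$d$ disk place its $6$ children tangent to it, confined to a short arc on the side facing away from the root, so that they meet neither the root nor sibling branches. Choosing the radii to shrink quickly (say $R_{d+1}=\varepsilon R_d$ for a small fixed $\varepsilon>0$) guarantees, by a routine induction, that the whole subtree rooted at a depth-$d$ node lies in a tiny ball around that node; distinct branches then stay disjoint and the only intersections are the intended parent--child ones. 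This yields exactly $k$ sizes.

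To see that $G_k$ is \emph{not} a $(k-1)$-size-disk graph, fix any disk representation. Starting from the root $u_0$, apply Observation~\ref{obs:circlePacking} to the induced $K_{1,6}$ formed by $u_0$ and its children to obtain a child $u_1$ whose disk is strictly smaller than that of $u_0$; the presence of $u_0$'s parent disk is irrelevant, since the kissing-number argument only inspects $u_0$ and its six children. If $u_1$ is still internal, repeat the argument at $u_1$, and so on. Because every node of depth at most $k-2$ is internal and is the centre of such an induced $K_{1,6}$, this descent produces a root-to-leaf path $u_0,u_1,\ldots,u_{k-1}$ along which the disk radii strictly decrease. Hence any representation uses at least $k$ distinct sizes, so $G_k$ cannot be realized with $k-1$ sizes, which completes the separation.

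The only nonroutine point is the realization step: checking that the branches can be packed with one size per level without creating spurious adjacencies. I also want to stress why branching is essential rather than a single path of stars: Observation~\ref{obs:circlePacking} only guarantees that \emph{some} child disk is strictly smaller, not a prescribed one, so an adversarial representation could make a different child small at each level. The complete $6$-ary tree neutralises this, since whichever child happens to be the smaller one is itself the centre of an induced $K_{1,6}$, allowing the descent to continue for the full $k-1$ steps regardless of the representation.
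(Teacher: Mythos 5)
Your proposal is correct and matches the paper's own proof essentially step for step: the paper also takes $G_k$ to be the rooted complete $6$-ary tree with $k$ levels (described recursively as six copies of $G_{k-1}$ joined to a new root), realizes it with one disk size per level, and uses Observation~\ref{obs:circlePacking} in the same descent along a root-to-leaf path to force $k$ strictly decreasing sizes in any representation. Your two added remarks---that the parent disk does not interfere with the kissing-number argument, and that the full branching is needed because the Observation only guarantees \emph{some} child is smaller---are points the paper leaves implicit, and they are handled correctly.
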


\begin{proof}
For each $k\in\mathbb{N}$ we construct a disk graph~$G_k$ that requires the disks to be of at least $k$ different sizes (and can be realized by disks of \emph{exactly} $k$ different sizes). For $k=2$ we simply take $K_{1,\,6}$ by Observation~\ref{obs:circlePacking}.

For $k\geq3$, we take for $G_k$ six copies of $G_{k-1}$, each with its own distinguished central vertex, to which we add a single vertex $v_k$ connected to the six central vertices. It is easy to see that $G_k$ is a $k$-size disk graph. See Figure~\ref{fig:disks}. The graph $G_k$ is a rooted $6$-ary tree with $k$ levels.

\begin{figure}
	\centering
	\includegraphics[width=0.4\textwidth]{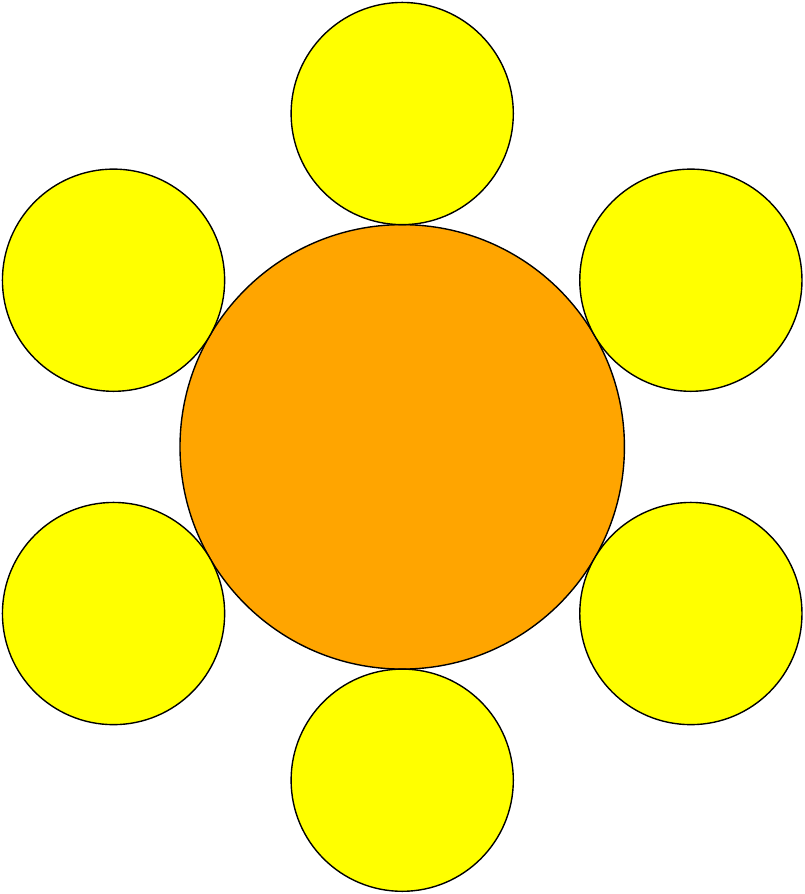}\hspace{2cm}
	\includegraphics[width=0.4\textwidth]{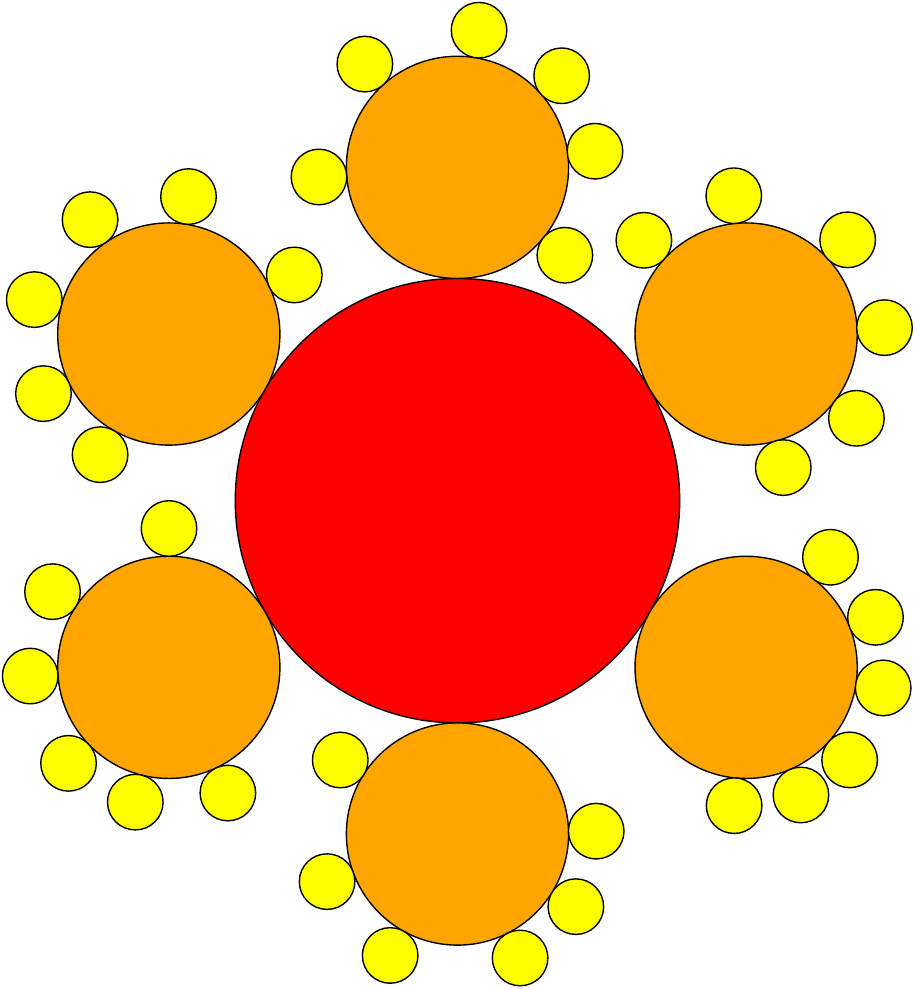}
	\caption{The left shows~$G_2$, the right shows~$G_3$.}
	\label{fig:disks}
\end{figure}

To see that drawing the disk representation of $G_k$ in the plane requires $k$ different sizes of disks, consider any intersection model with disks. Let $\tilde{D}_1$ be the disk representing the central vertex $v_k$ of $G_k$. The vertices $u_1,\,\ldots,\,u_6$ adjacent to $v_k$ in $G_k$ are represented by disks~$\{D_{2,\,j}\}_{j\in[6]}$ that must all intersect $\tilde{D}_1$ and must not intersect among themselves. Therefore, by Observation~\ref{obs:circlePacking}, at least one of them, let's call it $\tilde{D}_2$, is of a size strictly smaller than $\tilde{D}_1$. We repeat the same argument at an arbitrary ``level''~$i$; at least one of the disks~$\{D_{i,\,j}\}_{j\in[6]}$ must be of a strictly smaller size than $\tilde{D}_{i-1}$ in order to be able to intersect it, as well as to avoid intersecting among themselves. We denote it by $\tilde{D}_i$. This means that in the intersection model we have disks $\tilde{D}_1,\,\ldots,\,\tilde{D}_k$ with decreasing size. Thus, we have disks of $k$ different sizes.
\end{proof}

\section{Outer-string versus outer-segment}
\label{sec:outer}
In this section we show that the class of outer-segment graphs is a proper subclass of outer-string graphs.

Let us discuss first the obvious approach based on previous works. The common way to show that there are string graphs that are not segment graphs is to show that there are some string graphs such that, in any model, some pairs of curves have to intersect multiple times. Thus, such a graph cannot have a model using segments. This is related to the concept of weak realizations and was developed in~\cite{kratochvilExp}; see~\cite{M13} for a recent account. At first glance it seems that the approach does not match well with the concept of outer-representations.

Our approach here to distinguish classes of graphs is based on counting arguments to bound the number of different neighbourhoods of the vertices in each of the classes. Such an approach is implicitly based on the concept of VC-dimension, although our eventual presentation uses polynomials. It provides also a new (weaker) tool to separate segment graphs from string graphs.

We will use the expression that a set of segments in the plane is in \emph{general position} if and only if no three endpoints of said segments are collinear.

\begin{lemma}\label{lemma:segsCross}
Fix a segment~$\overline{AB}$ in the plane. Then there exist two polynomials~$\p_{AB}$ and $\q_{AB}$ of degree at most four such that, for each segment~$\overline{CD}$ in general position with respect to~$\overline{AB}$, segments~$\overline{AB}$ and $\overline{CD}$ cross if and only if $\p_{AB}(C,\,D)<0$ and $\q_{AB}(C,\,D)<0$.
\end{lemma}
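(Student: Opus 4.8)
The plan is to express the crossing condition for two segments $\overline{AB}$ and $\overline{CD}$ purely in terms of orientation (signed area) determinants, and then verify that these determinants are polynomials of the claimed degree in the coordinates of $C$ and $D$. Recall the standard criterion: two segments cross (their relative interiors intersect) if and only if $C$ and $D$ lie on opposite sides of the line through $A,B$, \emph{and} $A$ and $B$ lie on opposite sides of the line through $C,D$. Each ``opposite sides'' condition is naturally encoded by a product of two signed-area determinants being negative, so the strategy is to take $\p_{AB}(C,D)$ and $\q_{AB}(C,D)$ to be exactly these two products.

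**Key steps, in order.**
First I would fix coordinates $A=(a_1,a_2)$, $B=(b_1,b_2)$, which are constants, and write $C=(c_1,c_2)$, $D=(d_1,d_2)$ as the variables. For a point $P$, let $\mathrm{orient}(A,B,P)$ denote the signed area determinant
\[
\mathrm{orient}(A,B,P)=\det\begin{pmatrix} b_1-a_1 & b_2-a_2 \\ P_1-a_1 & P_2-a_2\end{pmatrix},
\]
which is positive, negative, or zero according to whether $P$ is left of, right of, or on the directed line $AB$. Second, I would set
\[
\p_{AB}(C,D)=\mathrm{orient}(A,B,C)\cdot\mathrm{orient}(A,B,D),
\]
so that $\p_{AB}(C,D)<0$ exactly captures ``$C$ and $D$ strictly separated by line $AB$.'' Symmetrically I would set
\[
\q_{AB}(C,D)=\mathrm{orient}(C,D,A)\cdot\mathrm{orient}(C,D,B),
\]
so that $\q_{AB}(C,D)<0$ captures ``$A$ and $B$ strictly separated by line $CD$.'' Third, I would invoke the standard segment-crossing lemma to conclude that $\overline{AB}$ and $\overline{CD}$ cross if and only if both products are negative, with the general-position hypothesis (no three endpoints collinear) ensuring no determinant vanishes, so that all inequalities are strict and the biconditional is clean.

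**Degree bookkeeping.**
Finally I would check the degrees. In $\p_{AB}$, the factor $\mathrm{orient}(A,B,C)$ is affine (degree one) in $C$ and does not involve $D$, and likewise $\mathrm{orient}(A,B,D)$ is degree one in $D$; their product therefore has total degree two in $(C,D)$, comfortably within the bound of four. In $\q_{AB}$, each factor $\mathrm{orient}(C,D,A)$ is degree one in $C$ and degree one in $D$ (the determinant is bilinear once $A$ is fixed), so each factor has total degree two, and the product has total degree four. Thus both polynomials have degree at most four, as required.

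**Expected main obstacle.**
There is no deep obstacle here; the content is essentially a restatement of the classical two-determinant test for segment intersection. The one point demanding care is the treatment of boundary cases: the naive determinant test admits collinear and endpoint-touching configurations, so I would lean on the general-position assumption to exclude every degenerate case and thereby guarantee that ``cross'' is equivalent to the two \emph{strict} inequalities, with no need for auxiliary bounding-box checks. A minor secondary check is confirming that the correct notion of ``cross'' for this paper (interiors meeting versus mere intersection) aligns with the strict-inequality formulation, which again follows from general position.
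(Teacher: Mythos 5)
Your proposal is correct and matches the paper's proof essentially verbatim: the paper also takes $\p_{AB}$ and $\q_{AB}$ to be the products of the two pairs of orientation (signed-area) determinants, uses general position to make both inequalities strict, and obtains the same degree bound (degree two for the separation by line $AB$, degree four for the separation by line $CD$). Nothing further is needed.
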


\begin{proof}
In this proof we denote the coordinates of $A$ by~$(a_1,\,a_2)$, of $B$ by~$(b_1,\,b_2)$, and similarly for points $C$ and~$D$.

The triple $(A,B,C)$ of points is in counterclockwise order if and only if the \emph{signed area} of the triangle $ABC$ is negative, that is
$$
	\frac{1}{2}\left|\begin{array}{ccc}
	a_1&a_2&1\\
	b_1&b_2&1\\
	c_1&c_2&1
	\end{array}\right|<0.
$$
This condition can also be written as
$$
	(c_2-a_2)(b_1-a_1)>(b_2-a_2)(c_1-a_1).
$$
Note that the order of the points in the triple is important: if $(A,B,C)$ is in counterclockwise order, then $(A,C,B)$ is not in counterclockwise order.

Two segments in general position $\overline{AB}$ and $\overline{CD}$ cross if and only if the points $A,\,B,\,C$, and $D$ satisfy the following:
\begin{enumerate}
\item exactly one of the triples $(A,\,C,\,D)$ and $(B,\,C,\,D)$ is in counterclockwise order, and
\item exactly one of the triples $(A,\,B,\,C)$ and $(A,\,B,\,D)$ is in counterclockwise order.
\end{enumerate}

This gives us the following equivalence: Segments $\overline{AB}$ and $\overline{CD}$ in general position cross if and only if
\begin{enumerate}
\item $\Big((d_2-a_2)(c_1-a_1)-(c_2-a_2)(d_1-a_1)\Big)\Big((d_2-b_2)(c_1-b_1)-(c_2-b_2)(d_1-b_1)\Big)<0$, and
\item $\Big((c_2-a_2)(b_1-a_1)-(b_2-a_2)(c_1-a_1)\Big)\Big((d_2-a_2)(b_1-a_1)-(b_2-a_2)(d_1-a_1)\Big)<0$.
\end{enumerate}

We therefore have two polynomial inequalities of degree at most four in four variables to decide whether two segments cross.
\end{proof}

Let~$S$ be a set of segments and~$T\subseteq S$. A segment~$\gamma$, which may or may not belong to~$S$, is an \emph{exact transversal of~$T$ in~$S$} if and only if $\gamma$ intersects all the segments of~$T$, and $\gamma$ is disjoint from each segment of~$S\setminus T$.

\begin{lemma}\label{lemma:numberOfSeparableSubsetsOfSegs}
Let~$S$ be a set of $n$~segments in the plane. Consider the family
\begin{eqnarray*}
\mathbb{S}&=&\{T\subseteq S\mid\mathrm{there\ exists\ an\ exact\ transversal\ of}\ T\ \mathrm{in}\ S\}\\
&=&\{T\subseteq S\mid\exists\mathrm{\ a\ segment}\ \gamma\colon(\forall t\in T\colon\gamma\cap t\neq\emptyset\wedge\forall t'\in S\setminus T\colon\gamma\cap t'=\emptyset)\}.
\end{eqnarray*}
Then $|\mathbb{S}|\in\OO\left(n^4\right)$.
\end{lemma}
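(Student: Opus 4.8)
The plan is to bound the number of "exact transversal types" by a VC-dimension / polynomial-arrangement argument, using Lemma~\ref{lemma:segsCross} to translate the combinatorial crossing pattern into the sign pattern of a fixed collection of low-degree polynomials. The key observation is that whether a candidate transversal segment $\gamma=\overline{CD}$ crosses a fixed segment $\overline{A_iB_i}\in S$ is, by Lemma~\ref{lemma:segsCross}, determined by the signs of the two polynomials $\p_{A_iB_i}(C,D)$ and $\q_{A_iB_i}(C,D)$ of degree at most four in the four coordinates $(c_1,c_2,d_1,d_2)$. So I would regard each candidate transversal as a point in $\mathbb{R}^4$ (its two endpoints $C,D$), and consider the family of $2n$ polynomials $\{\p_{A_iB_i},\q_{A_iB_i}\}_{i\in[n]}$ on $\mathbb{R}^4$.

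The crucial point is that the set $T\subseteq S$ crossed by $\gamma$ — that is, the membership of $\gamma$ in each $\{\gamma : \gamma\cap\overline{A_iB_i}\neq\emptyset\}$ — is completely determined by the \emph{sign pattern} (the vector of signs, one per polynomial) that the point $(C,D)\in\mathbb{R}^4$ realizes against these $2n$ polynomials. Indeed, $\overline{A_iB_i}\in T$ iff $\p_{A_iB_i}(C,D)<0$ and $\q_{A_iB_i}(C,D)<0$, which reads off directly from the sign vector. Hence every $T\in\mathbb{S}$ corresponds to at least one realizable sign pattern of these polynomials, so $|\mathbb{S}|$ is at most the number of realizable sign patterns (cells of the arrangement) of $2n$ polynomials of degree at most $4$ in $4$ variables. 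I would then invoke the standard Milnor–Thom / Oleinik–Petrovsky bound: the number of distinct sign conditions realized by $m$ polynomials of degree at most $d$ in $D$ variables is $\OO\!\left((md/D)^D\right)$, hence $\OO(m^D)$ for fixed $d,D$. With $m=2n$, $d=4$, and $D=4$, this gives $\OO(n^4)$ sign patterns, and therefore $|\mathbb{S}|\in\OO(n^4)$, as claimed.

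Two technical wrinkles need a sentence of care. First, Lemma~\ref{lemma:segsCross} only characterizes crossings for segments in \emph{general position} with respect to $\overline{A_iB_i}$; the transversal $\gamma$ in the definition is allowed to merely \emph{intersect} (not strictly cross) segments of $T$, and intersection could in principle happen at an endpoint or along a degenerate configuration. I would handle this by arguing that such degenerate transversals lie on a measure-zero (lower-dimensional) subset and can be perturbed, or more cleanly by noting that each $T\in\mathbb{S}$ witnessed by \emph{some} $\gamma$ is also witnessed by a nearby $\gamma'$ in general position realizing the same $T$; either way the count is not increased asymptotically. Second, one must make sure the sign-pattern map is well-defined, i.e. that two transversals yielding the same $T$ but different sign vectors do not cause an \emph{under}count — but this direction is harmless, since distinct $T$ force distinct sign patterns on the relevant coordinates, so the map $T\mapsto(\text{sign pattern})$ is injective on a suitable representative.

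The main obstacle I anticipate is precisely this general-position / boundary issue: ensuring that the loose notion of "intersects" in the definition of exact transversal is faithfully captured by the strict polynomial inequalities of Lemma~\ref{lemma:segsCross}, so that no subset $T$ realizable only through a degenerate (boundary-touching) transversal is missed. Once that is settled by a perturbation argument, the remainder is a direct application of the bound on the number of sign conditions of a polynomial system, and the exponent $4$ emerges cleanly from the four coordinates parametrizing a candidate transversal segment.
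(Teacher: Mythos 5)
Your proposal is correct and takes essentially the same route as the paper: parametrize a candidate transversal by its endpoint pair in $\mathbb{R}^4$, use Lemma~\ref{lemma:segsCross} to obtain $2n$ polynomials of degree at most four, and bound $|\mathbb{S}|$ by the number of realizable sign patterns, for which the paper invokes the bound $\left(50Dm/d\right)^d$ from Matou\v{s}ek's book to get $(100n)^4\in\OO\left(n^4\right)$. The general-position wrinkle you flag is resolved in the paper precisely along the lines of your ``nearby witness'' remark: one fixes an exact transversal $\gamma_i$ for each $T_i\in\mathbb{S}$ and slightly \emph{enlarges} every segment of $S\cup\{\gamma_1,\ldots,\gamma_k\}$, which turns touchings into proper crossings and achieves general position without creating or destroying intersections, so each $T_i$ is still witnessed and the map from $T_i$ to the sign pattern of its witness remains injective.
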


\begin{proof}
Let~$p_1,\,\ldots,\,p_m$ be polynomials on~$\mathbb{R}^d$, and let~$\sigma\in\{-1,\,0,\,1\}^m$ be a vector. If there exists an~$x\in\mathbb{R}^d$ such that for all~$i$ the sign of~$p_i$ in~$x$ is~$\sigma_i$, then~$\sigma$ is called a \emph{sign pattern of~$p_1,\,\ldots,\,p_m$}.

Following Matou\v{s}ek~\cite[Chapter 6]{matousek} and references therein, the maximum number of sign patterns for a collection $p_1,\,p_2,\,\ldots,\,p_m$ of $d$-variate polynomials of degree at most $D$ is bounded by
$$
\left(\frac{50Dm}{d}\right)^d.
$$

Let $\gamma_1,\,\ldots,\,\gamma_k$ be exact transversals of sets~$T_1,\,\ldots,\,T_k$ in $S$, respectively, such that $\left\{T_i\mid i\in[k]\right\}=\mathbb{S}$. We now modify $S\cup\{\gamma_1,\,\ldots,\,\gamma_k\}$ to general position by enlarging each segment by a small amount without introducing any new intersections. Now we have segments in general position; we use~$S'$ to denote the new, modified set.

Consider a fixed segment~$\overline{AB}\in S'$. By Lemma~\ref{lemma:segsCross} there exist polynomials~$\p_{AB}$ and $\q_{AB}$ of degree at most four such that a segment~$\overline{CD}$ crosses $\overline{AB}$ if and only if $\p_{AB}(C,\,D)<0$ and $\q_{AB}(C,\,D)<0$.

We therefore consider the set $P=\bigcup_{\overline{AB}\in S'}\{\p_{AB},\,\q_{AB}\}$ of $2n$~polynomials in four~variables of degree at most four. The set~$P$ defines a number of sign patterns, which is an upper bound for $|\mathbb{S}|$.

In our case this gives us the result $|\mathbb{S}|\leq(100n)^4\in\OTheta\left(n^4\right)$.
\end{proof}

\begin{thm}\label{thm:outerstringNotSegment}
There are outer-string graphs that are not segment graphs.
\end{thm}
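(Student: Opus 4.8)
The plan is to combine the polynomial counting bound of Lemma~\ref{lemma:numberOfSeparableSubsetsOfSegs} with a family of outer-string graphs in which a small set of vertices exhibits exponentially many distinct neighbourhoods. Concretely, for each $n$ I would build a graph $G_n$ containing an independent set $A=\{a_1,\dots,a_n\}$ together with one additional vertex $w_T$ for every subset $T\subseteq[n]$, where $w_T$ is adjacent to exactly the vertices $\{a_i : i\in T\}$ among $A$ (the adjacencies among the $w_T$ themselves are irrelevant and will be whatever the model produces). If $G_n$ were a segment graph, then fixing a segment representation and letting $S$ be the $n$ segments representing $A$, the segment representing $w_T$ would cross exactly the segments $\{a_i: i\in T\}$ and be disjoint from the rest of $S$; that is, it would be an exact transversal of $\{a_i:i\in T\}$ in $S$. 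Hence all $2^n$ subsets $\{a_i:i\in T\}$ would lie in the family $\mathbb{S}$ of Lemma~\ref{lemma:numberOfSeparableSubsetsOfSegs}, giving $2^n\le|\mathbb{S}|\le(100n)^4$, which fails for all sufficiently large $n$. So it remains only to check that $G_n$ is outer-string.

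For the outer-string model I would work inside a disk and realise each $a_i$ as a vertical segment hanging from the top boundary arc, with the $a_i$ placed left to right at $x=1,\dots,n$ and their lower endpoints free in the interior. For a subset $T$, the curve $w_T$ starts at its own point on the bottom boundary arc, rises into the strip lying below all the lower endpoints of the $a_i$, then travels horizontally across this strip making a thin upward spike at each coordinate $x=i$ with $i\in T$ (crossing $a_i$ once and returning) and passing below the free endpoint of $a_j$ whenever $j\notin T$. Thus $w_T$ crosses $a_i$ if and only if $i\in T$ and is disjoint from the remaining strands, and every curve, both the $a_i$ and the $w_T$, has exactly one endpoint on the boundary circle. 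This is a valid outer-string representation, so $G_n$ is outer-string by construction; the $w_T$ may cross one another, but this only adds edges inside $\{w_T\}$ and does not change any neighbourhood $N(w_T)\cap A=T$.

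Combining the two halves, $G_n$ is outer-string, while the exact-transversal correspondence forces any segment representation to realise $2^n$ distinct members of $\mathbb{S}$, contradicting Lemma~\ref{lemma:numberOfSeparableSubsetsOfSegs} once $2^n>(100n)^4$. Taking such an $n$ yields an outer-string graph that is not a segment graph. As a byproduct, the same example shows that there are string graphs that are not segment graphs, recovering the weaker separation mentioned above.

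The step I expect to require the most care is the outer-string construction and, in particular, verifying that it is genuinely an \emph{outer}-string model rather than merely a string model: each of the $n+2^n$ curves must have precisely one endpoint on the boundary circle, and the spikes must be thin enough that $w_T$ meets $a_i$ only for $i\in T$. The counting half is then immediate from Lemma~\ref{lemma:numberOfSeparableSubsetsOfSegs}. A secondary point worth stating explicitly is that only the neighbourhoods into $A$ enter the argument, so we are free to ignore the uncontrolled edges among the subset-vertices.
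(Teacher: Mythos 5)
Your proposal is correct and follows essentially the same route as the paper: the paper's construction also takes the vertex set $[k]\cup\pow[k]$ with each subset-vertex adjacent to exactly its elements, realises it as an outer-string graph, and invokes Lemma~\ref{lemma:numberOfSeparableSubsetsOfSegs} to conclude that a segment model could exhibit only $\OO(k^4)$ distinct neighbourhoods on $[k]$, contradicting the $2^k$ required. The only cosmetic difference is that the paper fixes the subset-vertices to form a clique, whereas you let those mutual adjacencies be whatever your explicit outer-string model produces — both choices are harmless since, as you note, only the neighbourhoods within $[k]$ enter the counting argument.
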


\begin{proof}
For a natural number~$k$ we construct a graph~$G_k$ in the following way. Take $V(G_k)\define[k]\cup\pow[k]$, where $\pow[k]$ is the power set of~$[k]$. We denote vertices in~$[k]$ by $1,\,2,\,3,\,\ldots$, and vertices in~$\pow[k]$ by $A,\,B,\,C,\,\ldots$ Take
$$
E(G_k)\define\{AB\mid A,\,B\in\pow[k]\}\cup\{iA\mid i\in[k],\,A\in\pow[k],\,i\in A\}.
$$
Thus $G_k$ is a graph on $2^k+k$~vertices. It is easy to see that $G_k$ is an outerstring graph (see Figure~\ref{fig:outerstringGraph}). We intend to prove that $G_k$ is not a segment intersection graph for a sufficiently large number~$k$.

\begin{figure}
	\centering
	\includegraphics[scale=.9]{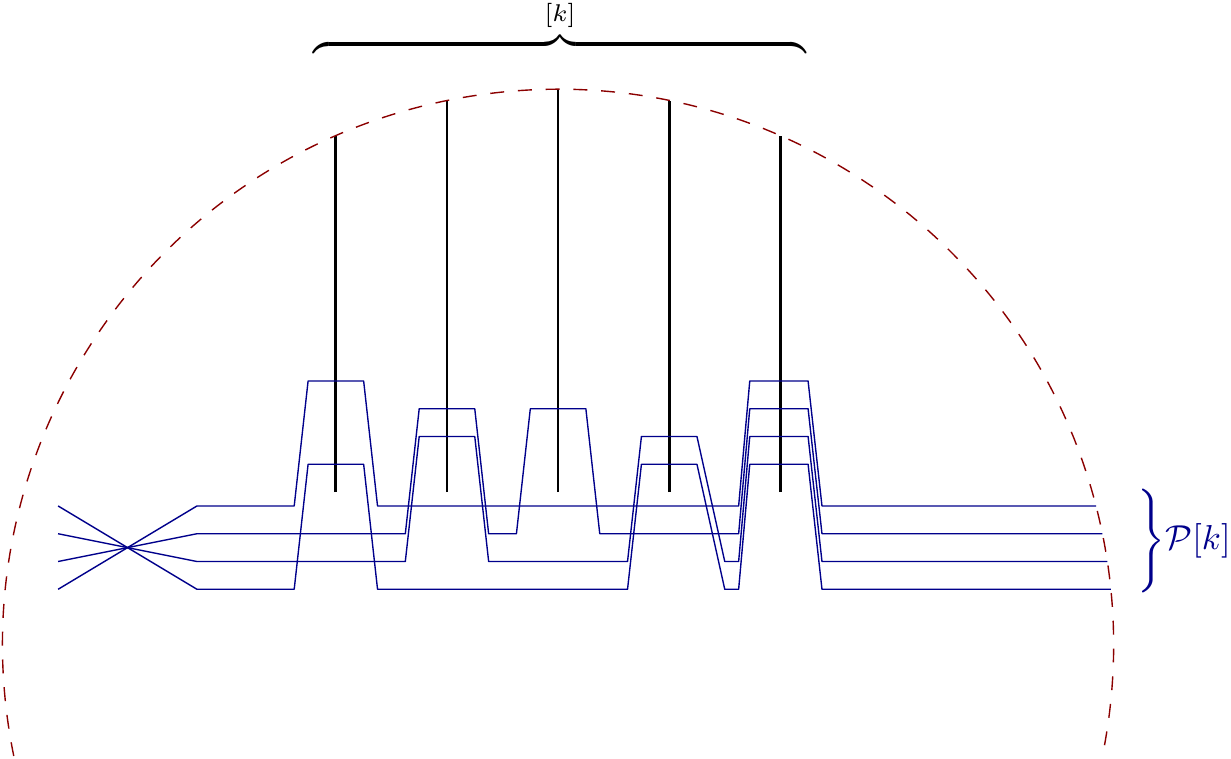}
	\caption{Sketch of the graph~$G_k$ from the proof~of Theorem~\ref{thm:outerstringNotSegment}.}
	\label{fig:outerstringGraph}
\end{figure}

Consider an intersection model with segments of the graph~$H_k\define G_k\left[[k]\right]$ induced by~the~vertices in~$[k]$, and let~$S_k$ be the set of segments representing~$[k]$ in the drawing. By Lemma~\ref{lemma:numberOfSeparableSubsetsOfSegs} there exist $\OO\left(k^4\right)$~different subsets of $S_k$ that are exactly transversed by any new, additional segment. Therefore any segment graph that contains~$H_k$ is going to have $\OO\left(k^4\right)$~different neighbourhoods on the vertices~$[k]$. Contrary to this, the graph~$G_k$ has $\OTheta\left(2^k\right)$~different neighbourhoods on the vertices~$[k]$. Since for any constant~$C$ there exists a~$k_0$ such that for any $k\geq k_0$ it holds that $Ck^4<2^k$, we obtain that $G_{k_0}$ is an outer-string graph but not a segment graph.
\end{proof}

Since there are outer-string graphs that are not segment graphs, they cannot be outer-segment graphs either. Note that the graph~$H$ discussed in Lemmas~\ref{lemma:triangle} and~\ref{lemma:ordGadgetUnitSegs} is a segment graph that is not an outer-string graph. This means that there is \textbf{no containment} between the class of segment graphs and the class of outer-string graphs.

\section{Conclusions}
We have discussed strict containment between different classes of geometric intersection graphs. In a preliminary version of this work~\cite{CabelloJ16} we asked the following two questions:
\begin{itemize}
\item ``Let us mention one of the problems that we found more interesting in this context: is the class of outer-segment graphs a strict superclass of ray graphs?"
\item ``Consider the class $\mathcal{A}$ of intersection graphs of downward rays, that is, halflines that are contained in the halfplane $\{(x,y)\in \mathbb{R}^2 \mid y<c \}$ for some constant $c$. Let $\mathcal{B}$ be the class of intersection graphs of grounded segments, that is, segments contained in the halfplane $\{(x,y)\in \mathbb{R}^2 \mid y\ge0 \}$ with one endpoint on the $x$-axis. It is easy to see that $\mathcal{A}$ is a subset of $\mathcal{B}$. Is the containment proper?"
 \end{itemize}
These questions have been settled in the follow up work by Cardinal et al.~\cite{CardinalFMTV16}. 
An anonymous reviewer of this paper has raised an interesting, additional question: what can be said about segment graphs that can be realized with specific segment lengths? Or disks? The question for segments of fixed slopes has been considered by Cern{\'{y}} et al.~\cite{CernyKNP01}.
 
\section*{Acknowledgments}
We are grateful to Jean Cardinal, Jan Kratochv\'\i l and Bartosz Walczak for useful comments on the problems considered here. Special thanks go to Jan for explaining some parts of~\cite{KGK}.



\begin{thebibliography}{10}

\bibitem{BK98}
H.~Breu and D.~G. Kirkpatrick.
\newblock Unit disk graph recognition is {NP}-hard.
\newblock {\em Comput. Geom.} 9(1-2):3--24, 1998,
  \url{http://dx.doi.org/10.1016/S0925-7721(97)00014-X}.

\bibitem{CabelloJ16}
S.~Cabello and M.~Jej{\v c}i{\v c}.
\newblock Refining the hierarchies of classes of geometric intersection graphs.
\newblock {\em CoRR} abs/1603.08974, 2016,
  \url{http://arxiv.org/abs/1603.08974v1}.

\bibitem{CardinalFMTV16}
J.~Cardinal, S.~Felsner, T.~Miltzow, C.~Tompkins, and B.~Vogtenhuber.
\newblock Intersection graphs of rays and grounded segments.
\newblock {\em CoRR} abs/1612.03638, 2016,
  \url{http://arxiv.org/abs/1612.03638}.

\bibitem{CernyKNP01}
J.~{\v C}ern{\'{y}}, D.~Kr{\'{a}}l, H.~Nyklov{\'{a}}, and O.~Pangr{\'{a}}c.
\newblock On intersection graphs of segments with prescribed slopes.
\newblock {\em Graph Drawing: 9th International Symposium, {GD} 2001},
  pp.~261--271. Springer, Lecture Notes in Computer Science 2265, 2002,
  \url{http://dx.doi.org/10.1007/3-540-45848-4_21}.

\bibitem{CG09}
J.~Chalopin and D.~Gon\c{c}alves.
\newblock Every planar graph is the intersection graph of segments in the
  plane: Extended abstract.
\newblock {\em Proceedings of the Forty-first Annual ACM Symposium on Theory of
  Computing}, pp.~631--638. ACM, STOC '09, 2009,
  \url{http://doi.acm.org/10.1145/1536414.1536500}.

\bibitem{CFHW15}
S.~Chaplick, S.~Felsner, U.~Hoffmann, and V.~Wiechert.
\newblock Grid intersection graphs and order dimension.
\newblock {\em CoRR} abs/1512.02482, 2015,
  \url{http://arxiv.org/abs/1512.02482}.

\bibitem{CCJ90}
B.~N. Clark, C.~J. Colbourn, and D.~S. Johnson.
\newblock Unit disk graphs.
\newblock {\em Discrete Mathematics} 86(1-3):165--177, 1990,
  \url{http://dx.doi.org/10.1016/0012-365X(90)90358-O}.

\bibitem{JK92}
S.~Janson and J.~Kratochv\'{\i}l.
\newblock Thresholds for classes of intersection graphs.
\newblock {\em Discrete Mathematics} 108(1-3):307--326, 1992,
  \url{http://dx.doi.org/10.1016/0012-365X(92)90684-8}.

\bibitem{JU14}
S.~Janson and A.~J. Uzzell.
\newblock On string graph limits and the structure of a typical string graph.
\newblock {\em Journal of Graph Theory}, to appear,
\newblock Preprint at \url{http://arxiv.org/abs/1403.2911}.

\bibitem{K91a}
J.~Kratochv{\'{\i}}l.
\newblock String graphs. {I.} {T}he number of critical nonstring graphs is
  infinite.
\newblock {\em J. Comb. Theory, Ser. {B}} 52(1):53--66, 1991,
  \url{http://dx.doi.org/10.1016/0095-8956(91)90090-7}.

\bibitem{K91b}
J.~Kratochv{\'{\i}}l.
\newblock String graphs. {II.} {R}ecognizing string graphs is {NP}-hard.
\newblock {\em J. Comb. Theory, Ser. {B}} 52(1):67--78, 1991,
  \url{http://dx.doi.org/10.1016/0095-8956(91)90091-W}.

\bibitem{KGK}
J.~Kratochv\'{\i}l, M.~Goljan, and P.~Ku\v{c}era.
\newblock {\em String Graphs}.
\newblock Academia Praha, Prague, 1986.

\bibitem{kratochvilExp}
J.~Kratochv\'{\i}l and J.~Matou\v{s}ek.
\newblock String graphs requiring exponential representations.
\newblock {\em J. Comb. Theory, Ser. {B}} 53(1):1--4, 1991.
\url{http://dx.doi.org/10.1016/0095-8956(91)90050-T}

\bibitem{KM94}
J.~Kratochv\'{\i}l and J.~Matou\v{s}ek.
\newblock Intersection graphs of segments.
\newblock {\em J. Comb. Theory, Ser. {B}} 62(2):289--315, 1994.
\url{http://dx.doi.org/10.1006/jctb.1994.1071}

\bibitem{M13}
J.~Matou{\v{s}}ek.
\newblock String graphs and separators.
\newblock {\em Geometry, Structure and Randomness in Combinatorics},
  pp.~61--97. Scuola Normale Superiore, 2014.
\newblock Preprint at \url{http://arxiv.org/abs/1311.5048}.

\bibitem{matousek}
J.~Matou\v{s}ek.
\newblock {\em Lectures on Discrete Geometry}.
\newblock Graduate Texts in Mathematics 212. Springer-Verlag New York, 2002.
\url{http://dx.doi.org/10.1007/978-1-4613-0039-7}

\bibitem{M14}
J.~Matou\v{s}ek.
\newblock Near-optimal separators in string graphs.
\newblock {\em Combinatorics, Probability {\&} Computing} 23(1):135--139, 2014,
  \url{http://dx.doi.org/10.1017/S0963548313000400}.

\bibitem{MM13}
C.~McDiarmid and T.~M{\"{u}}ller.
\newblock Integer realizations of disk and segment graphs.
\newblock {\em J. Comb. Theory, Ser. {B}} 103(1):114--143, 2013,
  \url{http://dx.doi.org/10.1016/j.jctb.2012.09.004}.

\bibitem{MM14}
C.~McDiarmid and T.~M{\"{u}}ller.
\newblock The number of disk graphs.
\newblock {\em Eur. J. Comb.} 35:413--431, 2014,
  \url{http://dx.doi.org/10.1016/j.ejc.2013.06.037}.

\bibitem{MM99}
T.~A. McKee and F.~R. McMorris.
\newblock {\em Topics in intersection graph theory}.
\newblock SIAM Monographs on Discrete Mathematics and Applications. SIAM, 1999,
\url{http://dx.doi.org/10.1137/1.9780898719802}.

\bibitem{pawlik}
A.~Pawlik, J.~Kozik, T.~Krawczyk, M.~Laso\'n, P.~Micek, W.~T. Trotter, and
  B.~Walczak.
\newblock Triangle-free intersection graphs of line segments with large
  chromatic number.
\newblock {\em J. Comb. Theory, Ser. {B}} 105:6--10, 2014.
\url{http://dx.doi.org/10.1016/j.jctb.2013.11.001}

\bibitem{chi-bound}
A.~Rok and B.~Walczak.
\newblock Outerstring graphs are $\chi$-bounded.
\newblock {\em Proceedings of the Thirtieth Annual Symposium on Computational
  Geometry}, pp.~136:136--136:143. ACM, SOCG'14, 2014.
\url{http://doi.acm.org/10.1145/2582112.2582115}

\bibitem{orderingGadget}
M.~Schaefer.
\newblock Complexity of some geometric and topological problems.
\newblock {\em Graph Drawing: 17th International Symposium, GD 2009},
  pp.~334--344. Springer, Lecture Notes in Computer Science 5849, 2010,
  \url{http://dx.doi.org/10.1007/978-3-642-11805-0_32}.

\bibitem{SSS03}
M.~Schaefer, E.~Sedgwick, and D.~{\v S}tefankovi{\v c}.
\newblock Recognizing string graphs in {NP}.
\newblock {\em J. Comput. Syst. Sci.} 67(2):365--380, 2003,
  \url{http://dx.doi.org/10.1016/S0022-0000(03)00045-X}.

\bibitem{schaefer}
M.~Schaefer and D.~\v{S}tefankovi\v{c}.
\newblock Decidability of string graphs.
\newblock {\em J. Comput. Syst. Sci.} 68(2):319--334, 2004.
\url{http://dx.doi.org/10.1016/j.jcss.2003.07.002}

\bibitem{shanks}
D.~Shanks.
\newblock {\em Solved and Unsolved Problems in Number Theory}.
\newblock Chelsea, New York, 4th edition, 1993.

\bibitem{sierpinski}
W.~Sierpi\'nski.
\newblock {\em Pythagorean Triangles}.
\newblock Dover Books on Mathematics. Dover Publications, 2013.

\bibitem{sinden}
F.~W. Sinden.
\newblock Topology of thin film {RC} circuits.
\newblock {\em Bell System Technical Journal} 45(9):1639--1662, 1966.

\bibitem{suk}
A.~Suk.
\newblock Coloring intersection graphs of $x$-monotone curves in the plane.
\newblock {\em Combinatorica} 34(4):487--505, 2014.
\url{http://dx.doi.org/10.1007/s00493-014-2942-5}

\bibitem{outerplanarAreCircle}
W.~Wessel and R.~P{\"o}schel.
\newblock On circle graphs.
\newblock {\em Graphs, Hypergraphs and Applications: Proceedings of the
  Conference on Graph Theory Held in Eyba, October 1st to 5th, 1984},
  pp.~207--210. Teubner, Teubner-Texte zur Mathematik 73, 1985.

\end{thebibliography}
\end{document}